\newcounter{author}
\renewcommand*\author[1]{%
	\stepcounter{author}%
	\ifnum\c@author=1
	\gdef\@author{#1}%
	\else
	\xdef\@author{\unexpanded\expandafter{\@author\and#1}}%
	\fi
	\csgdef{author@\the\c@author}{#1}}
\newcommand*\email[1]{%
	\csgdef{email@\the\c@author}{#1}}
\newcommand*\address[1]{%
	\csgdef{address@\the\c@author}{#1}}
	\xdef\author@count{\the\c@author}%
\newcommand*\print@authors{%
	\ifnum\c@author>\author@count
	\else
	\print@author{\the\c@author}%
	\advance\c@author by 1
	\expandafter\print@authors
	\fi}
\newcommand*\print@author[1]{%
	\par\medskip
	\begin{tabular}{@{}l@{}}%
		\textsc{}\\
		\csuse{address@#1}\\
		\textit{E-mail address}:
		\href{mailto:\csuse{email@#1}}{\csuse{email@#1}}
\end{tabular}}
\title{The title}
\author{L.\:Kryvonos}
\address{Department of Mathematics, \\
	Vanderbilt University, Nashville, TN 37240, USA}
\email{liudmyla.kryvonos@vanderbilt.edu}
\author{E.\:B.\:Saff}
\address{Center for Constructive Approximation, Department of Mathematics, \\
	Vanderbilt University, Nashville, TN 37240, USA}
\email{edward.b.saff@vanderbilt.edu}
\newcommand{\sm}{\textnormal{supp}}
\newcommand{\me}{\mu^{*}}
\newcommand{\men}{\nu^{*}}
\newcommand{\R}{\mathbb{R}}
\newcommand{\C}{\mathbb{C}}
\newcommand{\ra}{\rightarrow}
\newcommand{\ve}{\varepsilon}
\newtheorem{theorem}{Theorem}[section]
\newtheorem{lemma}[theorem]{Lemma}
\newtheorem{corollary}[theorem]{Corollary}
\theoremstyle{definition}
\newtheorem{definition}[theorem]{Definition}
\theoremstyle{remark}
\newlist{steps}{enumerate}{1}
\setlist[steps, 1]{label = Step \arabic*:}
\newcommand{\beq}{\begin{equation}}
	\newcommand{\eeq}{\end{equation}}
\newcommand{\colim@}[2]{%
	\vtop{\m@th\ialign{##\cr
			\hfil$#1\operator@font colim$\hfil\cr
			\noalign{\nointerlineskip\kern1.5\ex@}#2\cr
			\noalign{\nointerlineskip\kern-\ex@}\cr}}%
}
\newcommand{\colim}{%
	\mathop{\mathpalette\colim@{\rightarrowfill@\scriptscriptstyle}}\nmlimits@
}
\renewcommand{\varprojlim}{%
	\mathop{\mathpalette\varlim@{\leftarrowfill@\scriptscriptstyle}}\nmlimits@
}
\renewcommand{\varinjlim}{%
	\mathop{\mathpalette\varlim@{\rightarrowfill@\scriptscriptstyle}}\nmlimits@
}
\begin{document}

	\title{On a problem of E. Meckes for the unitary eigenvalue process on an arc}
	
	\date{}
	\maketitle
	    
    \begin{abstract}
    	We study the \begingroup
    	\renewcommand{\thefootnote}{[\arabic{footnote}]}
    	problem \footnote{The problem was communicated to the second author shortly before the untimely death of Professor Meckes.} \endgroup originally communicated by E. Meckes on the asymptotics for the eigenvalues of the kernel of the unitary eigenvalue process of a random $n \times n$ matrix. The eigenvalues $p_{j}$ of the kernel are, in turn, associated with the discrete prolate spheroidal wave functions. We consider the eigenvalue counting function  $|G(x,n)|:=\#\{j:p_j>Ce^{-x n}\}$, ($C>0$ here is a fixed constant) and establish the asymptotic behavior           
    	of its average over the interval $x \in (\lambda-\varepsilon, 
    	\lambda+\varepsilon)$ by relating the function $|G(x,n)|$ to the solution $J(q)$ of the following energy problem on the unit circle $S^{1}$, which is of independent interest. Namely, for given $\theta$, $0<\theta< 2 \pi$, and given $q$, $0<q<1$, we determine the function
    	$J(q) =\inf \{I(\mu): \mu \in \mathcal{P}(S^{1}), \mu(A_{\theta}) = q\}$, where $I(\mu):= \iint \log\frac{1}{|z - \zeta|} d\mu(z) d\mu(\zeta)$ is the logarithmic energy of a probability measure $\mu$ supported on the unit circle and $A_{\theta}$ is the arc from $e^{-i \theta/2}$ to $e^{i \theta/2}$. 
    \end{abstract}  
	
	\textit{\small MSC:} {\small 31A05, 60F10}
	
	\textit{\small Keywords:} {\small unitary eigenvalue process, discrete prolate spheroidal wave functions, logarithmic energy problem with constraints}
	
	\section{Introduction}
	For $n \in \mathbb{N}$, denote by $\mathbb{U}(n)$ the set of $n \times n$ unitary matrices over $\mathbb{C}$. Since the set $\mathbb{U}(n)$ forms a compact topological group with respect to matrix multiplication and the usual topology, there exists a unique probability measure on  $\mathbb{U}(n)$ (called Haar measure) that is invariant under left- and right-translation. 
	In other words, a distribution of a uniform random element $U_{n}$ of $\mathbb{U}(n)$ is the (Haar) measure $\mu$ on $\mathbb{U}(n)$ such that for any measurable subset $\mathcal{A} \subset \mathbb{U}(n)$ and any fixed matrix $M \in \mathbb{U}(n)$,
	$$
	\mu(M \mathcal{A}) = \mu(\mathcal{A} M) = \mu(\mathcal{A}),
	$$
	where $M \mathcal{A}:=\{MU: \;\;U \in \mathcal{A}\}$ and $ \mathcal{A}M:=\{UM: \;\;U \in  \mathcal{A}\}$.
	
	 Denote the eigenvalues of a Haar-distributed random unitary $n\times n$ matrix $U_n$, all of which lie on the circle $\mathbb{S}^{1}:=\{z \in \mathbb{C}: |z| =1\}$, by $\{e^{i\theta_1},\dots,e^{i\theta_n}\}$, $0 \leq \theta < 2 \pi$. Our main goal is to study the eigenvalue counting function
	\beq \label{N}
	\mathcal{N}_{\theta} =\mathcal{N}_{(0,\theta)} :=\#\{j:0<\theta_j<\theta\},
	\eeq
	where $\theta\in(0,2\pi)$ is fixed, by utilizing the determinantal structure of the eigenvalue process for $\mathcal{N}_{\theta}$ that we describe next.
	
     A \textit{point process} $\mathcal{X}$ in a locally compact Polish space $X$ is a random discrete subset of $X$. Let $\mathcal{N}_{A}$ denote the (random) number of points of $\mathcal{X}$ in $A \subset X$. We say that $\mathcal{X}$ is a \textit{determinantal point process} if for any finite number of pairwise disjoint subsets $A_{1},...,A_{k} \subset X$,
    $$
    	\mathbb{E} \bigg[\prod_{j=1}^{k} \mathcal{N}_{A_{j}}\bigg] = \int_{A_{1}} ... \int_{A_{k}}  \det[K(x_{i},x_{j})]_{i,j=1}^{k} d\mu(x_{1})... d\mu(x_{k}),
    $$
    for some kernel $K: X \times X \rightarrow [0,1]$ and Borel measure $\mu$ on $X$.
    One of the remarkable properties of eigenvalue distributions of matrices from the compact classical groups is that they are determinantal point processes.
     The connection in the case of the unitary group has been known at least since the work \cite{Dyson} of F. J. Dyson, while for other groups one of the earliest references known to us is \cite{KS} by N. M. Katz and P. Sarnak. We state Theorem \ref{DPP} below for the unitary group only, and refer the reader to \cite{HaarBook} for more general results concerning other groups.

	\begin{theorem} (see \cite[Proposition 3.7]{HaarBook}) \label{DPP}
		The eigenvalue angles of uniformly distributed random matrices in $\mathbb{U}(n)$ is a determinantal point process with respect to uniform measure on $[0, 2\pi)$, with kernel
		\beq\label{kernel}
		K_{n}(x,y) := \sum_{j=0}^{n-1} e^{ij (x-y)}.
		\eeq
		\end{theorem}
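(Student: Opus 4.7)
The plan is to deduce the determinantal structure from the Weyl integration formula, which (for the unitary group) states that if $f$ is a class function on $\mathbb{U}(n)$, then
\[
\int_{\mathbb{U}(n)} f(U)\,d\mu(U) = \frac{1}{n!}\int_{[0,2\pi)^{n}} f(\theta_{1},\dots,\theta_{n})\,\prod_{1\le j<k\le n}\bigl|e^{i\theta_{j}}-e^{i\theta_{k}}\bigr|^{2}\,\frac{d\theta_{1}}{2\pi}\cdots\frac{d\theta_{n}}{2\pi}.
\]
Taking this for granted (it is standard and proved in \cite{HaarBook}), the eigenvalue angles have joint density
\[
p_{n}(\theta_{1},\dots,\theta_{n}) = \frac{1}{n!}\prod_{j<k}|e^{i\theta_{j}}-e^{i\theta_{k}}|^{2}
\]
with respect to uniform measure on $[0,2\pi)^{n}$; my first step is to quote this formula.

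Next I would exploit the Vandermonde structure. Writing $V(\theta) = \det\bigl(e^{ij\theta_{k}}\bigr)_{0\le j\le n-1,\,1\le k\le n}$, a direct computation gives $|V(\theta)|^{2} = \prod_{j<k}|e^{i\theta_{j}}-e^{i\theta_{k}}|^{2}$. Applying the identity $|\det A|^{2} = \det(A^{*}A)$ with $A = \bigl(e^{ij\theta_{k}}\bigr)$ produces
\[
\prod_{j<k}|e^{i\theta_{j}}-e^{i\theta_{k}}|^{2} \;=\; \det\!\Bigl[\,\textstyle\sum_{j=0}^{n-1}e^{ij(\theta_{p}-\theta_{q})}\Bigr]_{p,q=1}^{n} \;=\; \det\!\bigl[K_{n}(\theta_{p},\theta_{q})\bigr]_{p,q=1}^{n},
\]
so the joint density equals $\frac{1}{n!}\det[K_{n}(\theta_{p},\theta_{q})]$. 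This is the determinantal representation of the density in the sense of Mehta.

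The third step is to pass from the joint density to the factorial-moment formula defining a determinantal point process. For this I would verify the reproducing identity
\[
\int_{0}^{2\pi}K_{n}(x,y)K_{n}(y,z)\,\frac{dy}{2\pi} = K_{n}(x,z), \qquad \int_{0}^{2\pi}K_{n}(x,x)\,\frac{dx}{2\pi}=n,
\]
which both follow immediately from orthogonality of $\{e^{ijy}\}$. Given these, I would invoke the standard Dyson–Mehta lemma: if a symmetric joint density factors as $\frac{1}{n!}\det[K(x_{p},x_{q})]$ for a reproducing kernel of trace $n$, then integrating out $n-k$ of the variables produces $\frac{(n-k)!}{n!}\det[K(x_{p},x_{q})]_{p,q=1}^{k}$, so the $k$-point correlation function is $\rho_{k}(x_{1},\dots,x_{k}) = \det[K_{n}(x_{p},x_{q})]_{p,q=1}^{k}$. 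Finally, expanding $\prod_{j=1}^{k}\mathcal{N}_{A_{j}} = \sum_{(i_{1},\dots,i_{k})\text{ distinct}} \mathbf{1}_{A_{1}}(\theta_{i_{1}})\cdots \mathbf{1}_{A_{k}}(\theta_{i_{k}})$ for pairwise disjoint $A_{j}$ and taking expectations yields the desired identity.

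The main obstacle is the integration step in the Dyson–Mehta lemma: one must show inductively that $\int \det[K_{n}(x_{p},x_{q})]_{p,q=1}^{m}\frac{dx_{m}}{2\pi} = (n-m+1)\det[K_{n}(x_{p},x_{q})]_{p,q=1}^{m-1}$. This is proved by expanding the determinant along the last row or column, applying the two identities above term by term, and carefully tracking the $n$-versus-$(n-m+1)$ bookkeeping coming from the trace. Everything else is formal manipulation once the Weyl formula is accepted.
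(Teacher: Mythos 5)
Your proposal is correct, and it follows the standard route: the paper itself states this theorem without proof, citing \cite[Proposition 3.7]{HaarBook}, and the argument given there is exactly the one you outline (Weyl integration formula, the Vandermonde identity $|\det(e^{ij\theta_k})|^2=\det[K_n(\theta_p,\theta_q)]$, the reproducing property of $K_n$, and the Gaudin/Dyson--Mehta integration lemma to obtain $\rho_k=\det[K_n(x_p,x_q)]_{p,q=1}^k$). No gaps; the only bookkeeping point worth double-checking in a write-up is that $(A^*A)_{pq}$ gives $K_n(\theta_q,\theta_p)$ rather than $K_n(\theta_p,\theta_q)$, which is harmless since transposition preserves the determinant.
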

	For convenience, we will use an alternative form of the kernel (\ref{kernel}):
	\beq \label{kernel2}
		K_{n}(x,y) : = \begin{cases}
			\sin(\frac{n(x-y)}{2})/\sin(\frac{(x-y)}{2}), \;\;\;\;\;\; \textnormal{if} \;\; x-y \ne 0,\\
			n, \hspace{1.7in} \textnormal{if} \;\; x-y = 0.\\
		\end{cases}
	\eeq
     Although the kernels (\ref{kernel}) and (\ref{kernel2}) are different functions, they are unitarily similar and thus define the same point processes (see \cite[Section 5.4]{KS}).
     
   Theorem \ref{DPP} allows us to apply the next result.
	  
	 \begin{theorem} (J. B. Hough et al., \cite{Hough}, see also \cite[Theorem 4.1]{HaarBook}) \label{operator}
	 	
	 	Let $K: X \times X \rightarrow \C$ be a kernel on a locally compact Polish space $X$ and $\mu$ a Borel measure on $X$ such that the corresponding integral operator $\mathcal{K}: L^{2} (X,\mu) \rightarrow L^{2}(X,\mu)$ defined by
	 	$$
	 	\mathcal{K}(f)(x) := \int K(x,y) f(y)d\mu(y)
	 	$$
	 	is self-adjoint, nonnegative, and locally trace-class with eigenvalues in [0,1]. For $D \subset X$ measurable, let $K_{D}(x,y) := \mathbb{I}_{D}(x) K(x,y)\mathbb{I}_{D}(y)$ be the restriction of $K$ to $D \times D$. Suppose that $D$ is such that the operator $\mathcal{K}_{D}$ with kernel $K_{D}$ is trace-class. Denote by $\{p_{j}\}_{j \in \mathcal{J}}$ the eigenvalues of the operator $\mathcal{K}_{D}$ on $L^{2}(D, \mu)$ (the index set $\mathcal{J}$ may be finite or countable) and denote by $\mathcal{N}_{D}$ the number of particles of the determinantal point process with kernel $K$ that lie in $D$. Then
	 	$$
	 	\mathcal{N}_{D} \overset{d}{=} \sum_{j \in \mathcal{J}} \xi _{j},
	 	$$
	 	where $``\overset{d}{=}"$ denotes equality in distribution and the $\xi_{j}$ are independent Bernoulli random variables with $\mathbb{P}[\xi_{j} = 1] = p_{j}$ and $\mathbb{P}[\xi_{j} =0] = 1 - p_{j}$.
	 	\end{theorem}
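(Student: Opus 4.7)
The plan is to compute the probability generating function of $\mathcal{N}_D$ in two different ways and match them, recovering the classical Hough–Krishnapur–Peres–Virág identity.

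First, I would exploit the determinantal structure. By the defining property of determinantal point processes applied to the restriction inside $D$, the $k$-point correlation functions are $\det[K_D(x_i,x_j)]_{i,j=1}^k$. Writing
\[
z^{\mathcal{N}_D} \;=\; \prod_{x \in \mathcal{X}\cap D} z \;=\; \prod_{x \in \mathcal{X}\cap D}\bigl(1 - (1-z)\bigr)
\]
and expanding by inclusion–exclusion against the correlation functions, one obtains the Fredholm determinant identity
\[
\mathbb{E}\bigl[z^{\mathcal{N}_D}\bigr] \;=\; \det\bigl(I - (1-z)\mathcal{K}_D\bigr), \qquad z \in [0,1].
\]
The right-hand side is well-defined because $\mathcal{K}_D$ is trace-class, and the expansion converges termwise because the $k$-th term is controlled by $\|\mathcal{K}_D\|_1^k/k!$.

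Second, I would diagonalize. Since $\mathcal{K}_D$ is self-adjoint, nonnegative, and trace-class, the spectral theorem yields an orthonormal system of eigenfunctions with eigenvalues $\{p_j\}_{j\in\mathcal{J}}$. The bound $0 \le p_j \le 1$ is inherited from the corresponding bound on $\mathcal{K}$ by a variational argument: for any unit $f\in L^2(D,\mu)$, extending by zero gives $\langle \mathcal{K}_D f,f\rangle = \langle \mathcal{K}\tilde f,\tilde f\rangle \in [0,1]$. The Fredholm determinant then factors as
\[
\det\bigl(I - (1-z)\mathcal{K}_D\bigr) \;=\; \prod_{j\in\mathcal{J}} \bigl(1 - (1-z)p_j\bigr) \;=\; \prod_{j\in\mathcal{J}} \bigl(p_j z + (1-p_j)\bigr),
\]
which is precisely the probability generating function of $\sum_{j\in\mathcal{J}} \xi_j$ for independent Bernoulli$(p_j)$ variables $\xi_j$. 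Since probability generating functions determine the law of $\mathbb{Z}_{\ge 0}$-valued random variables, equality in distribution follows.

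The main obstacle is the first step: rigorously passing from the correlation-function definition of a DPP to the Fredholm determinant expression for $\mathbb{E}[z^{\mathcal{N}_D}]$ in the abstract locally-compact Polish setting. Careful bookkeeping of the inclusion–exclusion series is required, and the trace-class hypothesis on $\mathcal{K}_D$ is crucial both for convergence of the series and for the identification of its symmetric functions with traces of tensor powers of $\mathcal{K}_D$. A secondary (but easier) technicality arises when $\mathcal{J}$ is countably infinite: one must justify that $\sum_j \xi_j$ is almost surely finite, which follows from $\sum_j p_j = \trace \mathcal{K}_D < \infty$ via Borel–Cantelli.
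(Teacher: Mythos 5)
This theorem is quoted in the paper from Hough et al.\ \cite{Hough} and \cite[Theorem 4.1]{HaarBook}; the paper itself supplies no proof, so there is no in-paper argument to compare against. Your proposal reconstructs what is essentially the standard proof from those references, and it is correct in outline. The two pillars --- the Fredholm-determinant identity $\mathbb{E}[z^{\mathcal{N}_D}]=\det(I-(1-z)\mathcal{K}_D)$ and the spectral factorization $\det(I-(1-z)\mathcal{K}_D)=\prod_j(p_jz+1-p_j)$ --- are exactly the right ingredients, and your identification of the first step as the technical heart is accurate: one must show that the factorial moments of $\mathcal{N}_D$ equal $\mathbb{E}\binom{\mathcal{N}_D}{k}=\mathrm{tr}(\Lambda^k\mathcal{K}_D)$ (which follows from the paper's definition of a determinantal process applied to disjoint subsets of $D$, after a polarization/partitioning argument), and then justify the interchange of expectation with the inclusion--exclusion series. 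Your convergence bound is the right one, since $\mathrm{tr}(\Lambda^k\mathcal{K}_D)\le \|\mathcal{K}_D\|_1^k/k!$ for trace-class $\mathcal{K}_D$, which in particular gives $\mathbb{E}[2^{\mathcal{N}_D}]\le e^{\|\mathcal{K}_D\|_1}<\infty$ and so legitimizes Fubini for all $|1-z|\le 1$. Two small remarks: (i) the a.s.\ finiteness of $\sum_j\xi_j$ follows already from $\mathbb{E}[\sum_j\xi_j]=\sum_jp_j=\mathrm{tr}\,\mathcal{K}_D<\infty$, so Borel--Cantelli is not needed; (ii) since both $\mathcal{N}_D$ and $\sum_j\xi_j$ are a.s.\ finite nonnegative integers, matching generating functions on $z\in[0,1]$ does determine the common law, as you assert. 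Were this intended as a complete proof rather than a proposal, the missing content would be precisely the bookkeeping you flag in the first step (the passage from the disjoint-sets definition to the correlation functions and then to the Fredholm expansion); as a blueprint, it is the same route the cited sources take.
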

 	 
 	     Let $X=[0, 2\pi)$ and $\mu$ be a uniform measure on $X$. Consider an integral operator $\mathcal{K}_{n}$ with kernel $K_{n}$ from (\ref{kernel2}), restricted to $[0, \theta] \times [0, \theta]$. It's easy to see that the operator satisfies all conditions of Theorem \ref{operator}. Moreover, since its kernel is degenerate, $\mathcal{K}_{n}$ has $n$ eigenvalues $\{p_{j}\}_{j=1}^{n}$ (see, for example, \cite[Section 3.2]{Operator}); hence, by Theorem \ref{operator}, the counting function $\mathcal{N}_\theta$, introduced in (\ref{N}), is equal in distribution to a sum of $n$ independent Bernoulli random variables: 
 	     \beq \label{ND}
 	     \mathcal{N}_{\theta}\stackrel{d}{=}\sum_{j=1}^n\xi_j,
 	     \eeq
 	     where $\mathbb{P}[\xi_j=1]=p_j$ and $\mathbb{P}[\xi_j=0]=1-p_j$. We would like to study the behavior of the $p_j$ near zero. Namely, for $C>0$ large and fixed and $x \in \R$, consider 
 	     \beq \label{Gfunct}
 	     G(x,n):=\{j:p_j>Ce^{-x n}\},
 	     \eeq
 	     and let $|G(x,n)|$ be the number of $p_{j}$ in $G(x,n)$.
 	      Our main result is the following theorem that describes the asymptotic behavior of the function $|G(x,n)|$.
 	     \begin{theorem} \label{mainTh}
 	     	Let $\theta \in (0, 2\pi)$. For any fixed $\ve >0$ and $\lambda \in \mathbb{R}$,
 	     	\beq \label{mainform}
 	     	 \frac{1}{2\ve}\int_{\lambda -\ve}^{\lambda+\ve} |G(x,n)|dx = \frac{n}{2 \ve} (\Lambda(\lambda + \ve) - \Lambda(\lambda - \ve)) - o(n), \;\; n \rightarrow \infty,
 	     	\eeq
 	     	where $o(n)$ depends on $\lambda$, $\ve$ and the function $\Lambda(\lambda) := \underset{y \in [0,1]}{\sup}\{\lambda y - J(y)\}$ is the Fenchel-Legendre transform of the rate function $J(y) = \iint \log \frac{1}{|z - \zeta|} d \men_{y}(z) d \men_{y}(\zeta)$ with $\men_{y}$ given by (\ref{densityCircle}) of Theorem \ref{Sol I} below (with $q$ replaced by $y$).
 	     	
 	     	Furthermore, there exists a constant $c_{0}>0$, such that for all $\lambda \geq c_{0}$ expression (\ref{mainform}) becomes
 	     	$$ \frac{1}{2\ve}\int_{\lambda -\ve}^{\lambda+\ve} |G(x,n)|dx = n - o(n), \;\; n \rightarrow \infty. 
 	     	$$
 	     \end{theorem}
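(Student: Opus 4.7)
The plan is to convert the averaged counting function $\frac{1}{2\ve}\int|G(x,n)|\,dx$ into a logarithm of the Fredholm moment generating function $\mathbb{E}[e^{\mu n\mathcal{N}_{\theta}}]$, and then apply a large-deviation principle for $\mathcal{N}_{\theta}/n$ whose rate function is precisely the $J$ produced by Theorem~\ref{Sol I}. To set up the bookkeeping, I rescale by writing $a_{j}:=\frac{1}{n}\log(C/p_{j})$, so that $|G(x,n)|=\#\{j:a_{j}<x\}$ is literally the empirical counting function of the $a_{j}$'s. The quantity on the left of \eqref{mainform} is then an increment of an antiderivative of $|G|$, so the task reduces to pinning down the averaged density of the $a_{j}$'s.

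By \eqref{ND}, for every $\mu\in\R$ one has
\[
\log \mathbb{E}\!\left[e^{\mu n\mathcal{N}_{\theta}}\right] \;=\; \sum_{j=1}^{n} \log\bigl(1+p_{j}(e^{\mu n}-1)\bigr).
\]
Splitting the sum by the sign of $\mu-a_{j}$, I would show: (i) if $\mu>a_{j}$ by a fixed positive amount, the $j$th term equals $n(\mu-a_{j})+O(1)$ uniformly; (ii) if $\mu<a_{j}$ by a fixed positive amount, the term is $O(e^{-n(a_{j}-\mu)})$ and contributes negligibly; (iii) the transition window $|\mu-a_{j}|\leq n^{-1}$ contributes $O(1)$ per index, hence at most $O(n)$ in total. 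Using $\sum_{j:a_{j}<\mu}(\mu-a_{j})=\int_{-\infty}^{\mu}|G(t,n)|\,dt$, summation yields
\[
\log \mathbb{E}\!\left[e^{\mu n\mathcal{N}_{\theta}}\right] \;=\; n\!\int_{-\infty}^{\mu}|G(t,n)|\,dt \;+\; O(n),
\]
with the $O(n)$ depending on $\mu$ and $C$ but not on the fine structure of the spectrum.

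The second input is a large-deviation principle. Using the Weyl integration formula, the eigenangle density is a constant multiple of $\prod_{i<j}|e^{i\theta_{i}}-e^{i\theta_{j}}|^{2}$, and classical Ben Arous--Guionnet-type arguments give an LDP for the empirical spectral measure at speed $n^{2}$ with the logarithmic-energy rate function $I(\mu)$. Applying the contraction principle to the continuous map $\mu\mapsto\mu(A_{\theta})$ and identifying the constrained minimizer via Theorem~\ref{Sol I} yields an LDP for $\mathcal{N}_{\theta}/n\in[0,1]$ at speed $n^{2}$ with rate function $J$. Since $\mathcal{N}_{\theta}/n$ is bounded, Varadhan's lemma applies and gives
\[
\frac{1}{n^{2}}\log \mathbb{E}\!\left[e^{\mu n\mathcal{N}_{\theta}}\right] \;\longrightarrow\; \sup_{q\in[0,1]}\bigl(\mu q - J(q)\bigr) \;=\; \Lambda(\mu).
\]
Combining the two representations of $\log\mathbb{E}[e^{\mu n\mathcal{N}_{\theta}}]$, dividing by $n$, and evaluating at $\mu=\lambda\pm\ve$ gives
\[
\int_{\lambda-\ve}^{\lambda+\ve} |G(t,n)|\,dt \;=\; n\bigl(\Lambda(\lambda+\ve)-\Lambda(\lambda-\ve)\bigr) + o(n),
\]
which is \eqref{mainform} after division by $2\ve$. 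The second assertion follows because $J$ is finite at $1$: for $\lambda\geq J'(1^{-})+\ve=:c_{0}$ the supremum defining $\Lambda$ on $[\lambda-\ve,\lambda+\ve]$ is attained at $y=1$, so $\Lambda(\lambda\pm\ve)=(\lambda\pm\ve)-J(1)$ and the right-hand side collapses to $n-o(n)$.

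The main obstacle I anticipate is the LDP step: showing that the rate function for $\mathcal{N}_{\theta}/n$ obtained from the contraction principle coincides \textit{on the nose} with the $J$ of Theorem~\ref{Sol I}, together with enough uniformity in $\mu$ to swap the log-limit with the active/inactive decomposition. The piecewise-linear approximation of $\log(1+p_{j}(e^{\mu n}-1))$ is routine away from the transitions $\mu\approx a_{j}$, and the $\ve$-averaging in $x$ is exactly what dissolves the residual boundary errors and permits recovery of a Legendre-type derivative of the convex function $\Lambda$ even at points where it may fail to be differentiable.
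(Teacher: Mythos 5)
Your proposal is correct and follows essentially the same route as the paper: the same Bernoulli/MGF identity $\log\mathbb{E}[e^{\mu n\mathcal{N}_\theta}]=\sum_j\log(1+p_j(e^{\mu n}-1))$, the same active/inactive split around the threshold $p_j=Ce^{-\mu n}$ reducing the sum to $n\int_0^{\mu}|G(t,n)|\,dt+O(n)$ (the paper does this by Abel summation, you by Fubini, which is the same computation), and the same Hiai--Petz LDP plus contraction principle plus Varadhan identification of the limit as $\Lambda(\mu)=\sup_y\{\mu y-J(y)\}$, with the second assertion obtained exactly as in the paper from the sup being attained at $y=1$ once $\lambda-\ve\geq J'(1^-)$.
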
 
      \noindent
      	\textbf{Remark}. The above functions $G, \Lambda$ and $J$ depend on the constant $\theta$, though for simplicity we omit this dependence in our notations.\\

 	     To prove Theorem \ref{mainTh}, we use a key observation of T. Liu and E. Meckes \cite{Meckes} that the random variables $\frac{\mathcal{N}_{\theta}}{n}$ satisfy the large deviation principle, with the rate function $J(x)$ being a solution to some equilibrium problem on the unit circle. The function $\widehat{J}(x)$ that solves an equivalent problem on the interval $[-1,1]$, in turn, is a limit distribution of zeros of Heine–Stieltjes polynomials $-$ the polynomial solutions to the generalized Lam\'{e} differential equation, and was obtained in the work \cite{Andrei} by A. Mart\'{\i}nez-Finkelshtein and E. B. Saff. In Section \ref{ensect} we give an alternative proof of this result as well as establish the following theorem.
 	      
 	      \begin{theorem} \label{Sol I} $ $
 	      	
 	      	The measure $\men=:\men_{q} \in \mathcal{P}(\mathbb{S}^{1})$, such that 
 	      	$I(\men) =\inf \{I(\nu): \nu \in \mathcal{P}(\mathbb{S}^{1}), \nu(A_{\theta}) = q\}$, where $I(\nu):= \iint \log\frac{1}{|z - \zeta|} d\nu(z) d\nu(\zeta)$ and $A_{\theta} := \{z \in \mathbb{S}^{1}: -\frac{\theta}{2}\leq \arg z \leq \frac{\theta}{2}\}$, is unique and

 	      	i) if $q \geq \frac{\theta}{2 \pi}$, given by
 	      	\beq \label{densityCircle}
 	      	d\men(e^{i\psi}) = \frac{\sqrt{|\cos(\psi) - \alpha|}}{2 \pi\sqrt{|\cos(\psi) - \cos(\theta/2)|}} d\psi, \;\;\; 
 	      	\eeq
 	      	where
 	      	$
 	      	e^{i \psi} \in A_{\theta} \cup \{z \in \mathbb{S}^{1}: \arccos(\alpha) \leq \arg \; z \leq 2\pi - \arccos(\alpha)\},
 	      	$
 	      	with $\alpha$ determined from the equation
 	      	\beq  \label{alpha2}
 	      	\int_{-1}^{\alpha} \frac{\sqrt{|x - \alpha|}}{\pi \sqrt{|(x+1) (x-\cos(\theta/2)) (x-1)|}} dx = 1 - q;
 	      	\eeq
 	      	
 	      	ii) if $q \leq \frac{\theta}{2 \pi}$, given by (\ref{densityCircle}), where $
 	      	e^{i \psi} \in A^{c}_{\theta} \cup \{z \in \mathbb{S}^{1}: - \arccos(\alpha) \leq \arg \; z \leq \arccos(\alpha)\}
 	      	$
 	      	and $\alpha$ is a solution to the equation
 	      	$$
 	      	\int_{-1}^{\cos(\theta/2)} \frac{\sqrt{|x - \alpha|}}{\pi \sqrt{|(x+1) (x-\cos(\theta/2)) (x-1)|}} dx = 1 - q.
 	      	$$
 	      \end{theorem}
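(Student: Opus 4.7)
The plan is to first reduce the problem on $\mathbb{S}^1$ to an equivalent constrained energy problem on the interval $[-1,1]$ via the Joukowsky projection, and then solve the reduced two-cut problem by an ansatz that is verified against the Euler-Lagrange conditions. Existence and uniqueness of $\men$ are standard: the constraint set $\{\nu \in \mathcal{P}(\mathbb{S}^1): \nu(A_\theta) = q\}$ is weak-$*$ closed and convex, $I$ is strictly convex and lower semicontinuous on measures of finite logarithmic energy, and the infimum is finite. Since $A_\theta$ is symmetric about the real axis and $I$ is invariant under $z \mapsto \bar z$, the conjugate pushforward of any minimizer is again a minimizer; strict convexity forces $\men$ to coincide with its conjugate, hence $\men$ is symmetric under $z \mapsto \bar z$.

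For such a symmetric measure $\nu$ on $\mathbb{S}^1$ with pushforward $\widehat\nu := \Phi_*\nu$ on $[-1,1]$ under $\Phi(e^{i\psi}) := \cos\psi$, the identity $|e^{i\psi} - e^{i\phi}|\,|e^{i\psi} - e^{-i\phi}| = 2|\cos\psi - \cos\phi|$ yields
\[
I(\nu) = \tfrac{1}{2}\iint \log\tfrac{1}{|x-y|}\,d\widehat\nu(x)\,d\widehat\nu(y) - \tfrac{\log 2}{2},
\]
and the constraint $\nu(A_\theta) = q$ becomes $\widehat\nu([\cos(\theta/2),1]) = q$. The problem reduces to a constrained logarithmic equilibrium on $[-1,1]$ with prescribed mass $q$ on $[\cos(\theta/2),1]$. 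Introducing a Lagrange multiplier $c$ for the equality constraint, the Euler-Lagrange conditions read $U^{\widehat\men}(x) = F_1$ on $\operatorname{supp}(\widehat\men) \cap [\cos(\theta/2),1]$ and $U^{\widehat\men}(x) = F_2$ on $\operatorname{supp}(\widehat\men) \cap [-1,\cos(\theta/2)]$ with $F_1 - F_2 = c$, together with the corresponding off-support inequalities in each region. The Schwarz-function structure of a two-cut problem with branch points at $\pm 1$, $\cos(\theta/2)$, and a free parameter $\alpha$ motivates the ansatz
\[
\frac{d\widehat\men}{dx} = \frac{\sqrt{|x-\alpha|}}{\pi\sqrt{|(x+1)(x-\cos(\theta/2))(x-1)|}},
\]
supported on $[-1,\alpha] \cup [\cos(\theta/2),1]$ when $\alpha < \cos(\theta/2)$ (case $q \geq \theta/(2\pi)$), or on $[-1,\cos(\theta/2)] \cup [\alpha,1]$ when $\alpha > \cos(\theta/2)$ (case $q \leq \theta/(2\pi)$). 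The borderline $\alpha = \cos(\theta/2)$ reduces the density to the Chebyshev form $1/(\pi\sqrt{1-x^2})$, whose pullback is the uniform measure on $\mathbb{S}^1$ with $A_\theta$-mass exactly $\theta/(2\pi)$, so the case dichotomy is consistent at the seam. Pulling back via $\Phi$ — using $|d\psi| = dx/\sqrt{1-x^2}$ and the factor of $2$ from the double cover — recovers the density (\ref{densityCircle}) on $\mathbb{S}^1$ and the claimed support.

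The parameter $\alpha$ is fixed by requiring the mass outside $[\cos(\theta/2),1]$ to equal $1-q$, which produces exactly (\ref{alpha2}) in case (i) and its case (ii) analogue; strict monotonicity of each integral in $\alpha$ yields unique solvability. The main obstacle is verifying the Euler-Lagrange conditions for the ansatz — that $U^{\widehat\men}$ is piecewise constant on $\operatorname{supp}(\widehat\men)$ with the correct jump $c$ across $\cos(\theta/2)$, and that the off-support inequalities hold strictly. My tool will be contour deformation of the Cauchy transform $C_{\widehat\men}(z) := \int (z-y)^{-1}\, d\widehat\men(y)$, which extends holomorphically to $\mathbb{C} \setminus \operatorname{supp}(\widehat\men)$ as a rational function of $\sqrt{(z-\alpha)(z+1)(z-\cos(\theta/2))(z-1)}$ with the correct branch cuts. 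The equilibrium condition $(U^{\widehat\men})'(x) = 0$ on $\operatorname{supp}(\widehat\men)$ is then equivalent to the vanishing of $\operatorname{Re} C_{\widehat\men}(x + i0)$, which follows from the factorization in the ansatz, and evaluating $U^{\widehat\men}$ at the endpoints $\pm 1$ and $\cos(\theta/2)$ pins down the constants $F_1$, $F_2$, and the Lagrange multiplier $c$ consistently.
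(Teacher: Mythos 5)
Your proposal is correct and follows the same overall architecture as the paper: reduce the circle problem to a constrained equilibrium problem on $[-1,1]$ via the Joukowski/Chebyshev projection, and solve the interval problem through the branch structure of the Cauchy transform, which forces the density $\sqrt{|x-\alpha|}\big/\big(\pi\sqrt{|(x+1)(x-\cos(\theta/2))(x-1)|}\big)$ and the mass condition determining $\alpha$. The two points where you diverge are local rather than structural. First, you transfer the problem at the level of the energy functional, using uniqueness plus conjugation-invariance to show the minimizer is symmetric and then the identity $I(\nu)=\tfrac12 I(\widehat\nu)-\tfrac{\log 2}{2}$; the paper instead transfers at the level of potentials ($U^{\mu}(\Psi(\zeta))=2U^{\nu}(\zeta)+\log 2$ on the boundary) and invokes the Frostman-type variational characterization directly on the circle, which avoids the symmetrization step but needs the two-component optimality criterion of Saff--Totik. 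Your route makes the two minimization problems literally equivalent, which is slightly cleaner for the uniqueness claim. Second, for the interval problem you posit the density as an ansatz and verify the (sufficient, by convexity) Euler--Lagrange conditions, whereas the paper derives it from necessary conditions: it first shows each component of the support is an interval (convexity of the induced external field) and then pins down $A=B=\alpha_0$ and $\beta_0=\beta$ in $H^2(z)=(z-A)(z-B)/\big((z+1)(z-\alpha_0)(z-\beta_0)(z-1)\big)$ by ruling out the alternatives. Your verification sketch is sound --- on the gap $(\alpha,\cos(\theta/2))$ the Cauchy transform is real of one sign, so $U^{\widehat\nu^{*}}$ is monotone there and the off-support inequality follows --- but this is the one step you flag without fully executing; it should be written out, since it is exactly where the case dichotomy and the identification of the free parameter with $\alpha$ are actually used.
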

 	     
 	      	\vspace{\baselineskip}

 	    Energy minimization problems, similar to the one we encounter in the work, appear naturally when studying statistical systems of many particles in the framework of the so-called log gases, where the particles are treated as a system of point charges on one or two dimensional sets, subject to the logarithmic interaction (see, for example, \cite{Ameur}, \cite{Chafai}). A similar energy problem appears in the work \cite{Charlier1}, \cite{Charlier2}, in the context of study of the determinants of Toeplitz-type operators. In particular, as an application of the results obtained in \cite{Charlier1}, the authors obtain an estimate for the probability that a random unitary matrix has all its eigenvalues concentrated on an arc of the unit circle (see also \cite{Widom} in connection to this result).
 	     We would like to note that \cite{Charlier1}, \cite{Charlier2} deal with the energy problem with an external field, while Theorem \ref{Sol I} concerns minimization of the logarithmic energy under the constraints on masses of the measure.

 	      Note that, with a suitable change of variables, the operator with kernel (\ref{kernel2}) can be written as
        \beq \label{opA}
        \mathcal{K}_{n}(f)(x) = \int_{- W}^{ W}  \frac{\sin n\pi (x-y)}{\sin \pi (x-y)} f(y)dy, \;\; x \in [-W, W],
        \eeq
        with $W = \frac{\theta}{4 \pi}$. An alternative way to study the behavior of its eigenvalues $p_{j}$ is from the operator theory perspective, and below we give a short summary of known results in this direction.

        Operators of the form (\ref{opA}) with $W \in (0, 1/2)$, whose eigenfunctions are well-known discrete prolate spheroidal wave functions, were extensively studied by D. Slepian in \cite{Slepian}. Along with establishing the asymptotics for the   eigenfunctions of the operator, he also examined the behavior of its eigenvalues. In particular, he showed in \cite{Slepian} that the operator (\ref{opA}) has $n$ distinct eigenvalues and the first $2nW$ of them (arranged in descending order) tend to cluster extremely close to 1, while the remaining eigenvalues tend to cluster similarly close to 0. These results, though, do not address precisely how many eigenvalues one can expect to find between $\ve$ and $1-\ve$. Numerical experiments performed by S. Karnik, J. Romberg, and M. A. Davenport in \cite{Karnik} suggest that the number of eigenvalues $p_{j} = p_{j}(W,n)$ in the ``transition region" between $\ve$ and $1-\ve$ behaves like $\#\{j : \ve<p_{j}<1-\ve\} = O(\log(nW) \log\big(\frac{1}{\ve}\big))$. They obtained the nonasymptotic bounds for the number of eigenvalues in $(\ve, 1-\ve)$ that captures the observed logarithmic dependence of the transition region on $n$ and $\ve$, and compared them with the previous known nonasymptotic bounds (see \cite{Boulsane}, \cite{Karnik2}, \cite{Zhu}).
        
         The paper is organized as follows: in Section \ref{Sect2} we prove Theorem \ref{mainTh},
           in Section \ref{ensect} we solve the constrained energy problem on an interval and the unit circle to find the rate function $J(x)$ from Theorem \ref{mainTh}.

	\section{Asymptotics for $p_{j}$.} \label{Sect2}
	 
	   To study the function $|G(x,n)|$, given by (\ref{Gfunct}), we make use of the known large deviation principle (LDP) due to F. Hiai and D. Petz \cite{Hiai} for the empirical spectral measure $\mu_n$ of $U_n$.
	
	\begin{definition}
	A sequence of probability Borel measures $\{P_{n}\}$ on a topological space $X$ satisfies an LDP with rate function $I$ and speed $s_{n}$ if for all Borel sets $\mathcal{B} \subseteq X$,
	$$
	- \underset{x \in \mathcal{B}^{0}}{\inf} I(x) \leq \underset{n \rightarrow \infty}{\lim \inf} \frac{1}{s_{n}} \log(P_{n}(\mathcal{B}))  \leq \underset{n \rightarrow \infty}{\lim \sup} \frac{1}{s_{n}} \log(P_{n}(\mathcal{B})) \leq 	- \underset{x \in \overline{\mathcal{B}}}{\inf} I(x)
	$$
	\end{definition}

	\begin{theorem}{(F. Hiai, D. Petz, \cite[Section 1]{Hiai})}

	 Let $U_{n} \in \mathbb{U}(n)$ and $\mu_{n} : = \frac{1}{n} \sum _{j=1}^{n} \delta(e^{i \theta_{j}})$, where $\{e^{i \theta_{j}}\}_{j=1}^{n}$ are the eigenvalues of $U_{n}$ and $\delta(\zeta)$ denotes the Dirac measure at $\zeta$.
	 Denote by $P_n$ the law of $\mu_n$. Then the sequence $\{P_n\}$ satisfies an LDP on the space $\mathcal{P}(\mathbb{S}^1)$ of probability measures on the unit circle equipped with the topology of weak convergence, with speed $n^2$ and strictly convex rate function
	\[I(\nu)=-\iint\displaylimits_{\mathbb{S}^{1} \times \mathbb{S}^{1}}\log|z-w|d\nu(z)d\nu(w).\]
\end{theorem}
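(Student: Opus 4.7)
The plan is to follow the classical route for log--gas LDPs, based on the Weyl integration formula. First I would record the joint density of the eigenvalue angles of a Haar--distributed $U_n \in \mathbb{U}(n)$, which equals
\[ \frac{1}{Z_n}\prod_{1\le i<j\le n}|e^{i\theta_i}-e^{i\theta_j}|^2\,d\theta_1\cdots d\theta_n, \]
with $Z_n=(2\pi)^n n!$ an explicit normalizer. In terms of $\mu_n = \frac{1}{n}\sum_{j=1}^{n}\delta(e^{i\theta_j})$, the Vandermonde factor rewrites as
\[ \prod_{i<j}|e^{i\theta_i}-e^{i\theta_j}|^2 = \exp\!\Bigl(n^2\iint_{z\ne w}\log|z-w|\,d\mu_n(z)d\mu_n(w)\Bigr), \]
so the density reads $e^{-n^2\widetilde{I}(\mu_n)}/Z_n$ with $\widetilde{I}$ the off--diagonal version of $I$. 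This pins $n^2$ as the correct speed and identifies $I(\nu)$ as the rate function; the rest of the argument converts this heuristic into matching Laplace estimates.

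For the upper bound, given a weakly closed set $\mathcal{F}\subset\mathcal{P}(\mathbb{S}^1)$, I would bound $P_n(\mathcal{F})$ by integrating the density over configurations with $\mu_n\in\mathcal{F}$. The standard trick is to replace $\log|z-w|$ by the truncation $\log_M|z-w|:=\max(\log|z-w|,-M)$, which is bounded and continuous, dominates $\log|z-w|$, and defines a weakly lower semicontinuous energy $I_M\le I$. Using that $\mathcal{P}(\mathbb{S}^1)$ is already compact in the weak topology (avoiding a separate exponential tightness argument), one obtains
\[ \limsup_{n\to\infty}\frac{1}{n^2}\log P_n(\mathcal{F})\le -\inf_{\nu\in\mathcal{F}}I_M(\nu)+C, \]
where the constant $C$ is pinned down from $\frac{1}{n^2}\log Z_n$ by plugging $\mathcal{F}=\mathcal{P}(\mathbb{S}^1)$ into the bound and using that $\inf I=0$ is attained at the uniform measure on $\mathbb{S}^1$. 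Monotone convergence $I_M\uparrow I$ as $M\to\infty$ then gives the desired upper bound.

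For the lower bound, fix an open set $\mathcal{U}$ and $\nu\in\mathcal{U}$ with $I(\nu)<\infty$. I would exhibit configurations whose empirical measure lies in $\mathcal{U}$ and on which the density is controlled from below. Convolving $\nu$ with a small rotation kernel yields a regularized $\nu_\delta$ of bounded density; partitioning $\mathbb{S}^1$ into $n$ arcs $A_1^{(\delta)},\ldots,A_n^{(\delta)}$ each of $\nu_\delta$--mass $1/n$ and letting $\theta_j$ range over $A_j^{(\delta)}$, one argues that the Vandermonde factor is at least $\exp(-n^2(I(\nu_\delta)+o(1)))$ on this tube of configurations, while the arc--volume contributes only $e^{o(n^2)}$. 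Sending $\delta\to 0$, so that $\nu_\delta\to\nu$ weakly and $I(\nu_\delta)\to I(\nu)$, delivers the matching lower bound.

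The main obstacle throughout is the diagonal singularity of $-\log|z-w|$: neither $I$ nor $\widetilde{I}(\mu_n)$ admits a naive bounded proxy, and the whole game is to keep truncation errors of order $o(n^2)$. The key tool is the Fourier expansion
\[ -\log|e^{i\alpha}-e^{i\beta}| = \sum_{k\ge 1}\frac{\cos k(\alpha-\beta)}{k}, \]
which in addition yields strict convexity for free: integrating against $\nu\otimes\nu$ gives $I(\nu)=\sum_{k\ge 1}|\widehat{\nu}(k)|^2/k$, a strictly convex quadratic functional on the affine set $\mathcal{P}(\mathbb{S}^1)$, minimized uniquely at the uniform measure, where all positive--index Fourier coefficients vanish.
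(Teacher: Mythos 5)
The paper does not prove this theorem; it is quoted directly from Hiai--Petz, and your outline (Weyl integration formula, truncation of the logarithmic kernel plus weak compactness of $\mathcal{P}(\mathbb{S}^1)$ for the upper bound, regularization and a tube of configurations for the lower bound, and the Fourier identity $I(\nu)=\sum_{k\ge 1}|\widehat{\nu}(k)|^2/k$ for strict convexity) is precisely the standard argument of the cited source. The only detail worth tightening is the lower bound: with one angle per arc, adjacent pairs can still collide, so one either restricts to separated sub-arcs or notes that the $O(n)$ nearest-neighbor pairs contribute only $O(n\log n)=o(n^2)$ to the logarithm of the Vandermonde factor.
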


	Let $A_{\theta}$ denote the arc of the circle consisting of those points with argument in $(0,\theta)$. Then $\nu\mapsto \nu(A_\theta)$ is a continuous function of $\nu$ (with respect to the topology of weak convergence), and so by the contraction principle \cite[Section 4.2]{Dembo}, the random variables $\mu_n(A_\theta)=\frac{\mathcal{N}_\theta}{n}$ satisfies an $LDP$ on $[0,1]$ with speed $n^{2}$ and rate function 
	\beq \label{min}
	J(y)=\inf\{I(\nu):\nu\in\mathcal{P}(\mathbb{S}^1),\nu(A_\theta)=y\}.
	\eeq
	
	 In Section \ref{ensect} we solve the minimization problem (\ref{min}) and give an explicit formula for the function $J(y)$. This allows us to utilize the next result and investigate the behavior of moment-generating functions of the sequence of random variables $\{\frac{\mathcal{N}_\theta}{n}\}$.
	\begin{theorem}{(Varadhan's Integral Lemma, \cite[Section 4.3]{Dembo})} \label{Var}
		
		Suppose that the family of random variables $\{Z_{\varepsilon}\}$ taking values in a topological space $X$ satisfies the LDP with rate function $J: X \rightarrow [0, \infty]$. Let $\phi: X \rightarrow \mathbb{R}$ be any continuous function. Assume further the following moment condition for some $\gamma>1$,
		\beq \label{cond2}
		\underset{\ve \ra 0}{\lim \sup}\;\ve \log \mathbb{E} [e^{\gamma \phi(Z_{\ve}) /\ve} ] < \infty.
		\eeq
		Then 
		\beq \label{FL}
		\underset{\ve \ra 0}{\lim}\; \ve \log \mathbb{E} [e^{ \phi(Z_{\ve}) /\ve} ] = \underset{y \in X}{\sup} \{\phi(y) - J(y)\}.
		\eeq
	\end{theorem}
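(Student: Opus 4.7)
The plan is to establish the two inequalities in (\ref{FL}) separately. The lower bound follows straightforwardly from the continuity of $\phi$ together with the LDP lower bound on open sets. The upper bound is the main obstacle: since $X$ is not assumed compact and $\phi$ may be unbounded, one must delicately control the contribution to $\mathbb{E}[e^{\phi(Z_\ve)/\ve}]$ coming from the region where $\phi(Z_\ve)$ is large, and it is precisely for this purpose that the moment hypothesis (\ref{cond2}) with $\gamma>1$ is imposed.

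For the lower bound, I would fix any $y_0 \in X$ with $J(y_0)<\infty$ (otherwise there is nothing to prove at $y_0$) and any $\delta>0$. Continuity of $\phi$ supplies an open neighborhood $G$ of $y_0$ on which $\phi \geq \phi(y_0)-\delta$, whence
$$
\mathbb{E}[e^{\phi(Z_\ve)/\ve}] \;\geq\; e^{(\phi(y_0)-\delta)/\ve}\,\mathbb{P}(Z_\ve \in G).
$$
Taking $\ve\log$ and invoking the LDP lower bound $\liminf_{\ve\ra 0}\ve\log\mathbb{P}(Z_\ve\in G)\geq -\inf_G J\geq -J(y_0)$ yields $\liminf \ve\log\mathbb{E}[e^{\phi(Z_\ve)/\ve}]\geq \phi(y_0)-\delta-J(y_0)$. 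Sending $\delta \ra 0$ and then taking the supremum over $y_0$ finishes this half.

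For the upper bound, I would fix $M>0$ and decompose the expectation according to the three events $\{\phi(Z_\ve)\leq -M\}$, $\{-M<\phi(Z_\ve)\leq M\}$, and $\{\phi(Z_\ve)>M\}$. The first piece is trivially bounded by $e^{-M/\ve}$. For the tail $\{\phi(Z_\ve)>M\}$, the key observation is that on this event $e^{\phi/\ve}=e^{(1-\gamma)\phi/\ve}e^{\gamma\phi/\ve}\leq e^{(1-\gamma)M/\ve}e^{\gamma\phi/\ve}$ since $\gamma>1$; this together with (\ref{cond2}) yields $\limsup\ve\log \leq (1-\gamma)M + C$ for some finite $C$, which tends to $-\infty$ as $M\ra\infty$.

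The middle piece is handled by a slicing argument. Partition $(-M,M]$ into subintervals $((k-1)\delta,k\delta]$ of width $\delta$ and set $B_k:=\{z:(k-1)\delta<\phi(z)\leq k\delta\}$; only finitely many $B_k$ meet the middle event. Each $B_k$ is contained in the closed set $\tilde{F}_k:=\{\phi\geq(k-1)\delta\}$, so
$$
\mathbb{E}[e^{\phi(Z_\ve)/\ve}\mathbf{1}_{B_k}] \;\leq\; e^{k\delta/\ve}\,\mathbb{P}(Z_\ve\in \tilde{F}_k),
$$
and the LDP upper bound on closed sets gives $\limsup \ve\log\mathbb{P}(Z_\ve\in \tilde{F}_k)\leq -\inf_{\tilde{F}_k} J\leq \sup_X(\phi-J)-(k-1)\delta$, using that for $z\in \tilde{F}_k$ one has $J(z)\geq \phi(z)-\sup(\phi-J)\geq (k-1)\delta-\sup(\phi-J)$. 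Thus $\limsup \ve\log\mathbb{E}[e^{\phi/\ve}\mathbf{1}_{B_k}]\leq \sup_X(\phi-J)+\delta$. Summing the finitely many slices (the count contributes $o(1)$ after $\ve\log$), combining with the two tail estimates, and sending first $\delta\ra 0$ and then $M\ra \infty$, I conclude $\limsup\ve\log\mathbb{E}[e^{\phi(Z_\ve)/\ve}]\leq \sup_X\{\phi-J\}$. The main obstacle throughout is this upper-bound analysis, and concretely the slicing step combined with the essential use of (\ref{cond2}) to discard the large-$\phi$ tail; the hypothesis $\gamma>1$ appears there as the only place where the noncompactness of $X$ is tamed.
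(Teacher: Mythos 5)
The paper does not prove this statement: it is quoted verbatim as Varadhan's Integral Lemma from \cite[Section 4.3]{Dembo}, so there is no internal proof to compare against. Your argument is correct and is essentially the standard proof from that reference: the LDP lower bound on a continuity neighborhood gives $\liminf_{\ve\ra 0}\ve\log\mathbb{E}[e^{\phi(Z_\ve)/\ve}]\geq\sup_X\{\phi-J\}$, while the upper bound follows from the three-way truncation at level $M$, the slicing of $\{-M<\phi\leq M\}$ into level sets combined with the LDP upper bound on the closed sets $\{\phi\geq(k-1)\delta\}$, and the use of H\"older-type control $e^{\phi/\ve}\leq e^{(1-\gamma)M/\ve}e^{\gamma\phi/\ve}$ on $\{\phi>M\}$ together with (\ref{cond2}) to kill the tail as $M\ra\infty$. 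I see no gaps.
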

	Therefore, if we take above $\phi(y) = \lambda y, \; \lambda \in \mathbb{R}$
	and show that condition (\ref{cond2}) holds with $\varepsilon =1/n$ and $Z_{\varepsilon} = \mathcal{N}_\theta$, then Theorem \ref{Var} implies that the next limit exist
	\begin{align} \label{lambda}
		\Lambda(\lambda):=\lim_{n\to\infty}\frac{1}{n^2} \log \mathbb{E}[e^{\lambda n\mathcal{N}_\theta}].
	\end{align}
It follows from (\ref{FL}) that the function $\Lambda(\lambda)$ appearing in (\ref{mainform}) is given by the Fenchel-Legendre transform of $J$, i.e., $\Lambda(\lambda) = \underset{y \in [0,1]}{\sup} \{\lambda y - J(y)\}$. On the other hand, in the proof of Theorem \ref{mainTh} we express the right side of (\ref{lambda}) in terms of the function $|G(x,n)|$ and from there obtain the relationship between $\Lambda(\lambda)$ and $|G(x,n)|$.
	\subsection{Proof of Theorem \ref{mainTh}}
  \begin{proof}
	As mentioned above, we are going to examine the limit in equation (\ref{lambda}). Note first that if $\lambda<0$, then $\lim_{n\to\infty}\frac{1}{n^2} \log \mathbb{E}[e^{\lambda n\mathcal{N}_\theta}]=0$ trivially, so we restrict our attention to $\lambda>0$. Since $\mathcal{N}_\theta\stackrel{d}{=}\sum_{j=1}^n\xi_j$ with the $\xi_j$ independent Bernoullis with success probabilities $p_j$ that are arranged in decreasing order, 
	\begin{align*}
		\frac{1}{n^2} \log \mathbb{E}[e^{\lambda n\mathcal{N}_\theta}]=\frac{1}{n^2}\sum_{j=1}^n\log(p_je^{\lambda n}+1-p_j)\\
		=\frac{1}{n^2}\sum_{j\in G(\lambda,n)}\log(p_je^{\lambda n}+1-p_j)+\frac{1}{n^2}\sum_{j\notin G(\lambda,n)}\log(p_je^{\lambda n}+1-p_j)\\
		=:S_1+S_2
	\end{align*}
	For $j\notin G(\lambda,n)$ and any $\lambda>0$, $p_j\leq p_je^{\lambda n}\leq C$, and so
	\[0\leq\sum_{j\notin G(\lambda,n)}\log(p_je^{\lambda n}+1-p_j)\leq n\log(C).\]
	We thus have that $0\leq S_2\leq \frac{\log(C)}{n}$.\par
	For $S_1$, first observe that 
	\[\log(p_je^{\lambda n}+1-p_j)=\lambda n+\log(p_j)+\log(1+\frac{1-p_j}{p_j}e^{-\lambda n})\]
	and that, for $j\in G(\lambda,n)$, $\frac{1-p_j}{p_j}e^{-\lambda n}<\frac{1}{C}$, so that
	\begin{align}
		S_1=\frac{\lambda|G(\lambda,n)|}{n}+\frac{1}{n^2}\sum_{j\in G(\lambda,n)}\log(p_j)+O(\frac{1}{n}),
	\end{align}
	where the implied constant depends only on $C$.\par
	For $j\in G(\lambda,n)$, define $\lambda_j$ such that $p_j=Ce^{-\lambda_j n}$ and set $\lambda_{0}:=0$ (note that $\lambda_{j} > 0$, all $j \geq 1$, since $G(\lambda,n)=\emptyset$ for $\lambda\leq 0$). Then
	\begin{align*}
		\frac{1}{n^2}\sum_{j\in G(\lambda,n)}\log(p_j)=\frac{\log(C)|G(\lambda,n)|}{n^2}-\frac{1}{n}\sum_{j\in G(\lambda,n)}\lambda_j\\
		=-\frac{1}{n}\sum_{j=1}^M\lambda_j+O(\frac{1}{n}),
	\end{align*}
	where $M=\max\{j:j\in G(\lambda,n)\}$.\par
	Define
	\[g_{n}(\lambda):=\frac{|G(\lambda,n)|}{n}.\]
	Then, since the $\lambda_j$ are in increasing order, $g_{n}(\lambda_{j+1})-g_{n}(\lambda_j)=\frac{1}{n}$ for each $j$ and $g_{n}(\lambda_1) =0$, by summation by parts,
	\begin{align*}
		-\frac{1}{n}\sum_{j=1}^M\lambda_j=  - \frac{\lambda_{M}}{n} -\sum_{j=1}^{M-1}\lambda_j(g_{n}(\lambda_{j+1})-g_{n}(\lambda_j))\\
		= - \frac{\lambda_{M}}{n} + \lambda_1g_{n}(\lambda_1) - \lambda_{M-1}g_{n}(\lambda_M) + \sum_{j=2}^{M-1}g_{n}(\lambda_j)(\lambda_j-\lambda_{j-1})\\
		=  -\frac{\lambda_{M}}{n}  -\lambda_Mg_{n}(\lambda_{M}) + \sum_{j=1}^{M}g_{n}(\lambda_j)(\lambda_j-\lambda_{j-1}).
	\end{align*}
	Since
	\[\sum_{j=1}^Mg_{n}(\lambda_j)(\lambda_j-\lambda_{j-1}) = \int_0^{\lambda_M}g_{n}(x)dx,\]
	we thus have
	\begin{align*}
		S_1=\lambda g_{n}(\lambda)-\lambda_Mg_{n}(\lambda_M)+\int_0^{\lambda_M}g_{n}(x)dx+O(\frac{1}{n})\\
		=\lambda_M(g_{n}(\lambda) - g_{n}(\lambda_M)) + \int_0^{\lambda}g_{n}(x)dx +O(\frac{1}{n})\\
		= \int_0^{\lambda}g_{n}(x)dx +O(\frac{1}{n}),
	\end{align*}
that verifies (\ref{cond2}).
	As $n\to \infty$, by Theorem \ref{Var}, the limit
	
	 \beq  \label{limg}
	 \lim_{n\to\infty}\frac{1}{n^2} \log\mathbb{E}[e^{\lambda n\mathcal{N}_\theta}]= \lim_{n\to\infty} \int_0^\lambda g_{n}(x)dx = \Lambda(\lambda) 
	 \eeq  exists for all $\lambda$.
	 Notice that since we don't have any additional information about the behavior of $g_{n}(x)$ for each fixed $x \in[0,\lambda]$, we can't interchange the integration with a limit in (\ref{limg}).
	 
	  For a fixed $\ve >0$, (\ref{limg}) implies 
	  $$ \int_{\lambda -\ve}^{\lambda+\ve} g_{n}(x)dx \underset{n \ra \infty}{\longrightarrow} \Lambda(\lambda + \ve) - \Lambda(\lambda - \ve), $$
	  therefore we immediately get
	  
	  \beq \label{average}
	   \frac{1}{2\ve}\int_{\lambda -\ve}^{\lambda+\ve} |G(x,n)|dx = \frac{n}{2 \ve} (\Lambda(\lambda + \ve) - \Lambda(\lambda - \ve)) - o(n). 
	  \eeq
	   	
	  The function $\Lambda(\lambda)$, according to Theorem \ref{Var}, is given by 
	   $$\Lambda(\lambda) = \underset{y \in [0,1]}{\sup} \{\lambda y - J(y)\}$$
	   with $J(y) = \iint \log \frac{1}{|z - \zeta|} d \men_{y}(z) d \men_{y}(\zeta)$, where $\men_{y}$ is a measure from Theorem \ref{Sol I}. It is easy to check that  the left-hand derivative of $J(x)$ at $x=1$ exists, therefore, for all $\lambda \gg J'(1)$ and $\varepsilon$ sufficiently small, (\ref{average}) becomes
	   $$
	   \frac{1}{2\ve}\int_{\lambda -\ve}^{\lambda+\ve} |G(x,n)|dx = n - o(n) 
	   $$
	  that completes the proof.
	  \end{proof}
	  
	 \vspace{0.1in}

	\section{Solution to Energy problem}\label{ensect}

	Let $\Sigma \subset \mathbb{C}$ be a compact subset of the complex plane and $\mathcal{P}(\Sigma)$ the collection of all positive unit Borel measures supported on $\Sigma$. For a given $0<\theta<2\pi$ we use the notations   
	$A_{\theta} := \{z \in \mathbb{S}^{1}: -\frac{\theta}{2}\leq \arg z \leq \frac{\theta}{2}\}$, $A^{c}_{\theta} := \{z \in \mathbb{S}^{1}: \frac{\theta}{2}\leq \arg z \leq 2\pi - \frac{\theta}{2}\}$ for the corresponding subarcs of the unit circle.

	The \textit{logarithmic energy} $I(\mu)$ and the \textit{logarithmic potential} $U^{\mu}$ of a measure $\mu \in \mathcal{P}(\Sigma)$ are defined, respectively, as
	\beq \label{energy}
	I(\mu):= \iint \log\frac{1}{|z - \zeta|} d\mu(z) d\mu(\zeta),
	\eeq
	
	\beq \label{potential}
	U^{\mu}(z):= \int \log\frac{1}{|z - \zeta|}  d\mu(\zeta).
	\eeq
	
	In this section we study the following two related problems:
	
	\vspace{\baselineskip}
	
	\textbf{Problem I.}  Given $q$ and $\beta$, with $0<q<1$, $-1<\beta< 1$, determine a measure $\mu \in \mathcal{P}([-1,1])$ that minimizes the energy  $I(\mu)$, subject to the constraint $\mu([\beta, 1]) = q$.
	
	\vspace{\baselineskip}
	
	\textbf{Problem II.}   
	Given $q$ and $\theta$, with $0<q<1$, $0 < \theta < 2\pi$, determine a measure $\nu \in \mathcal{P}(\mathbb{S}^{1})$ that minimizes the energy  $I(\nu)$, subject to the constraint $\nu(A_{\theta}) = q$.
	
	\vspace{\baselineskip}

	The limiting version of the Problems I and II with constraints on the mass at fixed points, $\beta =1$ and $\theta = 0$, respectively, were already treated in a couple of works. The description of the equilibrium charge distribution of amount $1-q$ on the unit circle when a fixed charge amount $q >0$ is placed at $t=1$ was done in \cite{LSV} by M. Lachance, E. B. Saff and R.S. Varga.  
	A similar problem on an interval was studied in \cite{SUV} by E. B. Saff, J.L. Ullman and R.S. Varga, where the the equilibrium charge distribution of amount $1-q_{1}-q_{2}$ was determined when a charge amount $q_{1} >0$ is placed at $t=1$, and a charge amount $q_{2} >0$ is placed at $t=-1$, $q_{1}+ q_{2} <1$. Thus, the solution to the Problems I and II, given by Theorems \ref{Sol II} and \ref{Sol I} below, can be considered as an extension of the results obtained in \cite{LSV} and \cite{SUV} (with $q_{2} =0$), respectively. We show that in the limiting case, when $\beta \rightarrow 1$ in Problem I, our result recovers the one from \cite{SUV} .
	
	\begin{theorem}{(A. Martínez-Finkelshtein, E.B. Saff, \cite[Section 4]{Andrei})} \label{Sol II} $ $
		
		The measure $\me=:\me_{q} \in \mathcal{P}([-1,1])$ such that 
		$I(\me) =\inf \{I(\mu): \mu \in \mathcal{P}([-1,1]), \mu([\beta, 1]) = q\}$ is unique and
		
		i) if $q \geq \frac{1}{\pi} \int_{\beta}^{1} \frac{1}{\sqrt{1 - x^{2}}} dx$, is given by 
		\beq \label{densitySol}
		d\me(x) = \frac{\sqrt{|x - \alpha|}}{\pi \sqrt{|(x+1)(x-\beta)(x-1)|}} dx, \;\;\;
		\eeq
		where $ x \in [-1, \alpha] \cup [\beta, 1]$, $\alpha \leq \beta$, and with $\alpha$ determined from the equation
		\beq  \label{alpha}
		\int_{-1}^{\alpha} \frac{\sqrt{|x - \alpha|}}{\pi \sqrt{|(x+1) (x-\beta) (x-1)|}} dx = 1 - q;
		\eeq 
		
		ii) if $q \leq \frac{1}{\pi} \int_{\beta}^{1} \frac{1}{\sqrt{1 - x^{2}}} dx$, is given by (\ref{densitySol}), where $x \in [-1, \beta] \cup [\alpha, 1]$, $\alpha \geq \beta$, and $\alpha$ is the solution to the equation
		$$
		\int_{-1}^{\beta} \frac{\sqrt{|x - \alpha|}}{\pi \sqrt{|(x+1) (x-\beta) (x-1)|}} dx = 1- q.
		$$
	\end{theorem}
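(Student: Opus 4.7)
The plan is to attack Theorem \ref{Sol II} by the direct variational method, relying on classical constrained potential theory on $\R$ combined with an ansatz for the support driven by comparison with the unconstrained Chebyshev equilibrium.

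First, existence and uniqueness of the minimizer $\me_q$ are routine: the admissible class $\{\mu \in \mathcal{P}([-1,1]) : \mu([\beta,1]) = q\}$ is convex and weakly compact, while $I$ is weakly lower semicontinuous and strictly convex on measures of finite logarithmic energy. Applying a Lagrange multiplier to the single linear constraint $\mu([\beta,1]) = q$ yields Euler--Lagrange conditions of the form
\begin{align*}
U^{\me_q}(x) &= F_1, \qquad x \in \sm(\me_q) \cap [-1,\beta),\\
U^{\me_q}(x) &= F_2, \qquad x \in \sm(\me_q) \cap [\beta,1],
\end{align*}
with the complementary inequalities $U^{\me_q}(x) \geq F_i$ quasi-everywhere on the corresponding subinterval off the support; the difference $F_1 - F_2$ is precisely the Lagrange multiplier associated with the mass constraint.

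Second, I would postulate the support structure by comparing with the unconstrained equilibrium $d\nu_0 = \frac{dx}{\pi\sqrt{1-x^2}}$, which assigns the critical mass $q^{\ast} := \frac{1}{\pi}\int_{\beta}^{1}\frac{dx}{\sqrt{1-x^2}}$ to $[\beta,1]$. When $q > q^{\ast}$ the constraint forces extra mass into $[\beta,1]$, optimally accommodated by retracting the support on $[-1,\beta]$ inward to some $[-1,\alpha]$ with $\alpha < \beta$ (case (i)); the symmetric situation for $q < q^{\ast}$ gives $[-1,\beta]\cup[\alpha,1]$ with $\alpha > \beta$ (case (ii)). On the ansatz support the Euler--Lagrange equality differentiates to the singular integral equation
\[
\text{P.V.}\int_{\sm(\me_q)}\frac{d\me_q(t)}{x-t}=0, \qquad x \in \sm(\me_q)^{\circ},
\]
whose general bounded solution on a two-interval set, obtained either by the classical Tricomi inversion or by computing the resolvent $\int\frac{d\me_q(t)}{z-t}$ on the hyperelliptic Riemann surface $w^2 = (z+1)(z-\alpha)(z-\beta)(z-1)$, has the form $\frac{|Mx+N|\, dx}{\pi\sqrt{|(x+1)(x-\alpha)(x-\beta)(x-1)|}}$. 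The requirement of integrability at the free endpoint $\alpha$ (the only endpoint not pinned to $\pm 1$ or $\beta$) forces $Mx+N$ to vanish at $x=\alpha$, which, after absorbing the overall constant by the total mass condition $\me_q([-1,1]) = 1$, yields exactly formula (\ref{densitySol}). The parameter $\alpha$ is then pinned down uniquely by the mass condition $\me_q([-1,\alpha])=1-q$, producing equation (\ref{alpha}); monotonicity in $\alpha$ of the left-hand side of (\ref{alpha}) as $\alpha$ ranges over the appropriate subinterval gives well-posedness and captures the case split at $q = q^{\ast}$.

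The main obstacle, as is typical in constrained equilibrium problems, will be verifying the complementary slackness inequalities $U^{\me_q}(x) \geq F_i$ strictly off the support inside $[-1,\beta)$ and $[\beta,1]$; these are needed to confirm that the explicit candidate actually solves the constrained variational problem and not merely the differentiated Euler--Lagrange equation. My plan is to compute $\frac{d}{dx}U^{\me_q}(x)$ off the support as a Cauchy principal value of the explicit density, reducing to a rational integral whose sign on the intervals $(\alpha,\beta)$ (case (i)) or $(\beta,\alpha)$ (case (ii)) can be read off algebraically, and then integrating from the support endpoints, where equality holds, to obtain the inequalities globally. The abstract uniqueness established at the outset then forces the candidate to coincide with $\me_q$, completing the proof.
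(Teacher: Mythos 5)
Your overall architecture is sound and runs parallel to the paper's: the paper likewise characterizes $\me$ by two Frostman-type conditions with separate constants $F_1,F_2$ on $[-1,\beta]$ and $[\beta,1]$ (via \cite[Theorem VIII.2.1]{BookST}), reduces to a two-interval support with one free endpoint $\alpha$, and recovers the density by analyzing the Cauchy transform $H(z)=\int\frac{d\me(\zeta)}{z-\zeta}$, whose square is rational with at most simple poles at the four endpoints. The main structural difference is that the paper \emph{derives} the support geometry rather than postulating it: writing $\me=(1-q)\me_1+q\me_2$, it observes that $U^{\me_2}$ is convex on $[-1,\beta)$, so $\sm\,\me_1$ is an interval by the convex-external-field theorem \cite[Theorem IV.1.10]{BookST}, and it controls the endpoint singularities of $H^2$ by a domination argument (de La Vall\'ee Poussin) showing $\me_i$ is dominated by the unconstrained equilibrium measure of its interval. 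Your construct-and-verify scheme can dispense with both of these, but only if the sufficiency check is actually carried out.

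That said, there are two concrete problems. First, your stated reason for the numerator vanishing at the free endpoint is wrong: integrability does \emph{not} force $M\alpha+N=0$, since $|x-\alpha|^{-1/2}$ is integrable. The correct mechanism, and the one the paper uses, is the variational inequality on the gap: if the density had an inverse-square-root singularity at $\alpha$ from inside $[-1,\alpha]$, then $\frac{d}{dx}U^{\me}(x)\to-\infty$ as $x\to\alpha^{+}$, which together with $U^{\me}(\alpha)=F_1$ violates $U^{\me}\geq F_1$ q.e.\ on $[-1,\beta]$. Equivalently, $\alpha$ is a soft edge and the density must vanish there; this is a consequence of the inequality in (\ref{Frostman m1}), not of integrability. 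Second, in your scheme the verification of the complementary slackness inequalities $U^{\me_q}\geq F_1$ on $(\alpha,\beta)$ \emph{is} the proof --- without it you have only produced a solution of the differentiated equation, of which there is a one-parameter family --- yet you leave it as a plan. The plan itself is workable: on $(\alpha,\beta)$ one has $(x+1)(x-\alpha)(x-\beta)(x-1)>0$, so $H$ is real and of constant sign there, and its sign is fixed by $H(x)\to-\infty$ as $x\to\beta^{-}$ (the density blows up at the hard edge $\beta$), whence $U^{\me}$ increases from $F_1$ to $F_2$ across the gap. You should also close the remaining loose ends of the verification: nonnegativity of the candidate density, strict monotonicity in $\alpha$ of the left side of (\ref{alpha}) so that $\alpha$ exists and is unique in $[-1,\beta]$, and the exclusion of the degenerate configuration in which the free endpoint sits at $\beta$ itself (the paper rules this out by the mass comparison with $m_2=\sigma([\beta,1])$). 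With those items supplied, uniqueness of the minimizer does finish the argument as you indicate.
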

	
	Note that in the trivial case, when $q = \frac{1}{\pi} \int_{\beta}^{1} \frac{1}{\sqrt{1 - x^{2}}} dx$, we get $\alpha=\beta$ and so Theorem \ref{Sol II} gives, as expected, the arcsine distribution, $d \me(x) = \frac{1}{\pi \sqrt{1 - x^2}}$.

	\begin{corollary}
		When $\beta \rightarrow 1$, the density function in (\ref{densitySol}) becomes
		$$
		d\me(x) = \frac{\sqrt{|x - \alpha|}}{\pi \sqrt{(x+1)}(1-x)} dx, \;\;\; x \in [-1, \alpha], 
		$$
		where $\alpha$ is the solution to the equation
		\beq 
		\int_{-1}^{\alpha} \frac{\sqrt{|x - \alpha|}}{\pi \sqrt{(x+1)}(1-x)} dx = 1 - q.
		\eeq
		This is precisely the measure obtained in \cite{SUV} that corresponds to the case $q_{1} =q$, $q_{2} = 0$.
	\end{corollary}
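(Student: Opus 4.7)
The corollary should follow directly by letting $\beta\to 1^-$ in Theorem \ref{Sol II}(i). First I would observe that the threshold $\frac{1}{\pi}\int_\beta^1 (1-x^2)^{-1/2}\,dx$ distinguishing case (i) tends to $0$ as $\beta\to 1$, so any fixed $q\in(0,1)$ lies in case (i) for $\beta$ close enough to $1$; the support then decomposes as $[-1,\alpha(\beta)]\cup[\beta,1]$ with $\alpha(\beta)<\beta<1$.

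Next I would take the pointwise limit of the density (\ref{densitySol}) on the left piece. For $x\in[-1,\alpha(\beta)]$ one has $x+1>0$, $x-\beta<0$ and $x-1<0$, hence
\[
\sqrt{\bigl|(x+1)(x-\beta)(x-1)\bigr|}
\;=\;\sqrt{(x+1)(\beta-x)(1-x)}
\;\longrightarrow\;\sqrt{x+1}\,(1-x)
\]
as $\beta\to 1$, which matches the denominator in the corollary. The main step left is to show that $\alpha(\beta)$ converges to a limit $\alpha_*\in(-1,1)$ solving the stated equation. Denote the left-hand side of (\ref{alpha}) by $F(\alpha,\beta)$: it is jointly continuous on $\{-1<\alpha<\beta<1\}$, strictly increasing in $\alpha$, and by monotone convergence
\[
F(\alpha,\beta)\;\longrightarrow\;\int_{-1}^{\alpha}\frac{\sqrt{\alpha-x}}{\pi\sqrt{x+1}\,(1-x)}\,dx
\]
uniformly for $\alpha$ in compact subsets of $(-1,1)$. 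Continuous dependence of the implicit solution then yields $\alpha(\beta)\to\alpha_*$, with $\alpha_*$ solving the displayed limiting equation.

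For the right piece, the interval $[\beta,1]$ shrinks to the point $\{1\}$ while carrying total mass $q$ by the constraint, so the restriction of $\me_q$ to $[\beta,1]$ converges weakly to $q\delta_1$. Combining both pieces, $\me_q$ converges weakly to $q\delta_1+\mu_{\mathrm{abs}}$, where $\mu_{\mathrm{abs}}$ is the absolutely continuous measure of total mass $1-q$ with the density displayed in the corollary. The identification with \cite{SUV} is then immediate: the limiting constrained problem is exactly the Saff--Ullman--Varga equilibrium for a fixed point charge $q$ placed at $t=1$ with the remaining mass $1-q$ free on $[-1,1]$, i.e.\ their setting with $q_1=q$, $q_2=0$. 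I expect the joint-continuity/monotone-convergence argument yielding $\alpha(\beta)\to\alpha_*$ (together with the weak convergence of the mass on the shrinking interval) to be the only technical point, though it remains routine.
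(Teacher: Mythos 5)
Your proposal is correct and follows the same route the paper intends: the paper states this corollary without proof, treating it as the formal limit $\beta\to 1$ of the density (\ref{densitySol}) and of equation (\ref{alpha}), and your argument simply fills in the routine justifications (the case-(i) threshold $\tfrac{1}{\pi}\arccos\beta\to 0$, the pointwise limit $\sqrt{(x+1)(\beta-x)(1-x)}\to\sqrt{x+1}\,(1-x)$, continuity and monotonicity of the implicit equation for $\alpha(\beta)$, and the collapse of the mass $q$ on $[\beta,1]$ to $q\delta_1$, which is exactly the Saff--Ullman--Varga configuration with $q_1=q$, $q_2=0$). No gaps.
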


	Once Theorem \ref{Sol II} is verified, the solution to the Problem II is immediate, as we show in Section \ref{Proof2}.

	\subsection{Auxiliary results} \label{aux.res.}

	We call $\mu \in \mathcal{P}([a,b])$ the \textit{unconstrained} equilibrium measure on an interval $[a,b]$ if $\mu $ minimizes the logarithmic energy $I(\mu)$ among all unit measures in $\mathcal{P}([a,b])$. It is well-known (see e.g. \cite[Section I.3]{BookST}) that
	$$
	d\mu(x) = \frac{1}{\pi}\frac{dx}{ \sqrt{(x-a)(b-x)}}, \;\;\; x \in [a,b].
	$$
	
	The measure $\me$ that minimizes the energy in Problem I can be viewed as the measure of the form 
	$$
	\me = (1-q) \me_{1} + q \me_{2},
	$$
	where $\me_{1} \in \mathcal{P}([-1,\beta])$ and $\me_{2} \in \mathcal{P}([\beta, 1])$. In order to characterize the measure $\me$, we refer to \cite[Theorem VIII.2.1]{BookST}, which states that a measure $\mu$ is globally optimal if and only if each component of $\mu$ is optimal when the other one is kept fixed and regarded as external field.
	Although the theorem is originally formulated for the case of closed sets of positive distance from one another, it is also valid in our setting, when two closed intervals have disjoint interiors.
	According to \cite[Theorem VIII.2.1]{BookST}, there exist constants $F_{1}$, $F_{2}$, such that the potential of the measure $\me$ satisfies
	\beq \label{Frostman m1}
	U^{\me}(z) \geq F_{1}, \;\;  \textnormal{q.e. on} \; [-1, \beta], \;\;\;\;\;  U^{\me}(z) = F_{1}, \;\; \textnormal{q.e. on} \; \sm \:\me_{1}, 
	\eeq
	
	\beq \label{Frostman m2}
	U^{\me}(z) \geq F_{2}, \;\;  \textnormal{q.e. on} \; [\beta, 1], \;\;\;\;\;  U^{\me}(z) = F_{2}, \;\; \textnormal{q.e. on} \;\sm\: \me_{2}.
	\eeq
	
	We remark that a similar reasoning can be applied to characterize the optimal measure $\men \in \mathbb{S}^{1}$ in Problem II.
	
	Our next goal is to show that the measure $\me$ is absolutely continuous with respect to the  measure $\sigma_{1}+ \sigma_{2}$, where $\sigma_{1}$, $\sigma_{2}$ are (unconstrained) equilibrium measures on $[-1,\beta]$ and $[\beta,1]$, respectively, using the following result of de La Vall\'{e}e Poussin.
	\begin{theorem}(see \cite[Theorem IV.4.5.]{BookST}) \label{Valle-Poussin}
		
		Let $\mu$ and $\nu$ be two measures of compact support, and let $\Omega$ be a domain in which both $U^{\mu}$, $U^{\nu}$ are finite and satisfy with some constant $c$ the inequality
		\beq \label{potIneq}
		U^{\mu}(z) \leq U^{\nu}(z) + c, \;\;\; z \in \Omega.
		\eeq
		If $A$ is the subset of $\Omega$ in which equality holds in (\ref{potIneq}), then $\nu \vert_{A} \leq \mu \vert_{A}$; that is, for every Borel set $B \subset A$, $\nu(B) \leq \mu(B)$.
	\end{theorem}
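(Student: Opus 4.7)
The plan is to introduce the $\delta$-subharmonic function $v(z):=U^{\nu}(z)-U^{\mu}(z)+c$ on $\Omega$, so that the hypotheses become $v\ge 0$ on $\Omega$ with $v\equiv 0$ on $A$, and then to exploit the distributional identity $\Delta v = 2\pi (\mu-\nu)$ in $\mathcal{D}'(\Omega)$ to transfer the pointwise minimum-principle behaviour of $v$ on $A$ into a sign statement about the signed Radon measure $\sigma:=\mu-\nu$. Writing its Jordan decomposition as $\sigma=\sigma^+-\sigma^-$, the desired conclusion $\nu|_A\le \mu|_A$ is equivalent to $\sigma^-(A)=0$, and that is what I would prove.

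The central analytic input is the classical spherical-mean identity: for $z_0\in\Omega$ and $r>0$ with $\overline{D(z_0,r)}\subset\Omega$, averaging $\log|z-\zeta|$ rotationally yields
\[
\frac{1}{2\pi}\int_0^{2\pi} v(z_0+re^{i\varphi})\,d\varphi \;-\; v(z_0)
\;=\; \int_{D(z_0,r)} \log\frac{r}{|\zeta-z_0|}\,d\sigma(\zeta).
\]
For $z_0\in A$ the left-hand side is nonnegative (since $v\ge 0$ on $\Omega$ and $v(z_0)=0$), so the right-hand side is nonnegative for every sufficiently small $r>0$.

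I would then argue by contradiction: suppose $\sigma^-(A)>0$. Since $\sigma^+\perp\sigma^-$, the Besicovitch/Lebesgue differentiation theorem for Radon measures on $\R^2$ furnishes a point $z_0\in A$ at which $\sigma^-$ has no atom (automatic from $U^{\nu}(z_0)<\infty$), $\sigma^-\bigl(D(z_0,r)\bigr)>0$ for every $r>0$, and
\[
\lim_{r\downarrow 0}\frac{\sigma^+\bigl(D(z_0,r)\bigr)}{\sigma^-\bigl(D(z_0,r)\bigr)}\;=\;0.
\]
The layer-cake identity rewrites both pieces of the Riesz integral above as
\[
\int_{D(z_0,r)}\log\frac{r}{|\zeta-z_0|}\,d\sigma^{\pm}(\zeta)
\;=\; \int_0^{\infty} \sigma^{\pm}\bigl(D(z_0,re^{-t})\bigr)\,dt,
\]
so choosing $r$ small enough that $\sigma^+(D(z_0,\rho))\le \tfrac12\sigma^-(D(z_0,\rho))$ for every $\rho\le r$ makes the $\sigma^+$-integrand at most half of the $\sigma^-$-integrand pointwise in $t$. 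Their difference is therefore strictly negative, contradicting the nonnegativity of the Riesz integral. Hence $\sigma^-(A)=0$.

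The main obstacle is the Besicovitch density step: one must produce a point $z_0$ that lies in the \emph{given} Borel set $A$ and simultaneously satisfies the two density conditions with respect to the mutually singular measures $\sigma^\pm$. The classical Besicovitch theorem provides such a $z_0$ at $\sigma^-$-a.e.\ point of $\R^2$, and the assumption $\sigma^-(A)>0$ guarantees that this almost-everywhere set meets $A$ in a $\sigma^-$-positive subset. The remaining technical ingredients --- the spherical-mean identity, the finiteness of the potentials ensured by the hypothesis, and the layer-cake manipulation --- are standard and can be executed routinely.
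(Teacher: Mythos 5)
The paper does not actually prove this statement: it is quoted verbatim as de La Vall\'ee Poussin's theorem from the cited reference \cite[Theorem IV.4.5]{BookST}, so there is no in-paper proof to compare against. Judged on its own, your argument is correct and complete modulo standard facts, and it follows essentially the classical line of proof of this result: the circular-mean identity for logarithmic potentials, rewritten via the layer-cake formula as $\int_{D(z_0,r)}\log\frac{r}{|\zeta-z_0|}\,d\sigma(\zeta)=\int_0^\infty\sigma(D(z_0,re^{-t}))\,dt=\int_0^r \sigma(D(z_0,t))\,\frac{dt}{t}$, combined with a differentiation-of-measures argument at a $\sigma^-$-density point of $A$. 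The reduction of the conclusion to $\sigma^-(A)=0$ is right, and the delicate points are all handled: the mutual singularity $\sigma^+\perp\sigma^-$ justifies the Besicovitch step; the finiteness of $U^{\mu}(z_0)$ and $U^{\nu}(z_0)$ (from the hypothesis, using $\sigma^+\le\mu$ and $\sigma^-\le\nu$) makes both layer-cake integrals finite so the subtraction is legitimate; and $z_0$ can indeed be chosen in $A$ itself because the exceptional sets for the density conditions are $\sigma^-$-null while $\sigma^-(A)>0$ by assumption. The only cosmetic remark is that the distributional identity $\Delta v=2\pi(\mu-\nu)$ is advertised as the engine of the proof but is never actually used --- the spherical-mean identity is derived directly from the definition of the potentials via Fubini --- so you could omit that sentence without loss.
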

	
	\begin{lemma}\label{AbsCont}
		The measures $\me_{1}$ and $\me_{2}$ that solve Problem I are absolutely continuous with respect to the equilibrium measure $\sigma_{1}$ on $[-1,\beta]$ and, respectively, equilibrium measure $\sigma_{2}$ on $[\beta,1]$.
		
	\end{lemma}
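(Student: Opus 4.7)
The plan is to apply Theorem~\ref{Valle-Poussin} (de la Vall\'ee Poussin) after rewriting the Frostman conditions (\ref{Frostman m1})--(\ref{Frostman m2}) in a form compatible with its hypotheses. I will argue $\me_1\ll\sigma_1$; the argument for $\me_2\ll\sigma_2$ is entirely symmetric.

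First I would construct a probability measure on $[-1,\beta]$ whose potential coincides with that of $\me$ on $[-1,\beta]$ up to an additive constant. Let $\widehat{\me_2}$ denote the balayage of $\me_2$ onto $[-1,\beta]$; by standard balayage theory it is a probability measure on $[-1,\beta]$ satisfying $U^{\widehat{\me_2}}(z)=U^{\me_2}(z)+c_b$ q.e.\ on $[-1,\beta]$ for some constant $c_b$. Setting $\rho:=(1-q)\me_1+q\widehat{\me_2}$, equation (\ref{Frostman m1}) together with $U^{\me}(z)=(1-q)U^{\me_1}(z)+qU^{\me_2}(z)$ gives $U^{\rho}(z)\geq F_1+qc_b$ on $[-1,\beta]$ with equality q.e.\ on $\sm\,\me_1$. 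Since $U^{\sigma_1}(z)=c_1$ (the Robin constant of $[-1,\beta]$) on the interval, this rearranges to $U^{\sigma_1}(z)\leq U^{\rho}(z)+(c_1-F_1-qc_b)$ on $[-1,\beta]$, with equality q.e.\ on $\sm\,\me_1$.

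Next I would extend this inequality to all of $\C$ so that Theorem~\ref{Valle-Poussin} applies on a domain. The function $\psi(z):=U^{\rho}(z)-U^{\sigma_1}(z)$ is the logarithmic potential of the compactly supported signed measure $\rho-\sigma_1$ of total mass $0$, hence it is harmonic on $\C\setminus[-1,\beta]$ and tends to $0$ as $|z|\to\infty$. Combined with $\psi\geq F_1+qc_b-c_1$ on $[-1,\beta]$, the minimum principle (viewing $\infty$ as an interior point of the Riemann sphere) yields $\psi(z)\geq\min(F_1+qc_b-c_1,\,0)$ globally; in the expected regime $F_1+qc_b\leq c_1$ this minimum equals $F_1+qc_b-c_1$, so the inequality $U^{\sigma_1}(z)\leq U^{\rho}(z)+(c_1-F_1-qc_b)$ holds on all of $\C$ with equality set $A\supseteq\sm\,\me_1$. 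Theorem~\ref{Valle-Poussin} applied with $\mu=\sigma_1$, $\nu=\rho$, $\Omega=\C$ then gives $\rho|_A\leq\sigma_1|_A$; since $\widehat{\me_2}\geq 0$ this yields $(1-q)\me_1|_A\leq\rho|_A\leq\sigma_1|_A$, and because $\me_1$ is concentrated on $\sm\,\me_1\subseteq A$ we conclude $\me_1\ll\sigma_1$.

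The main obstacle is controlling the sign of $c_1-F_1-qc_b$ so that the minimum-principle extension preserves equality on $\sm\,\me_1$: if instead $F_1+qc_b>c_1$, the largest admissible global constant in the potential inequality becomes $0$, equality holds only at $\infty$, and the argument collapses. I expect to handle this either by an a priori bound on the Frostman constants coming from the energy-minimality of $\me$, or by replacing $\rho$ with a different comparison measure (for instance, balaying $\sigma_1$ itself onto a suitable subset of $[-1,\beta]$) so that the resulting potential identity produces a non-positive additive constant automatically.
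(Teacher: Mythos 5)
Your overall strategy --- derive a potential inequality on $[-1,\beta]$ that becomes an equality q.e.\ on $\sm\,\me_{1}$, globalize it, and invoke Theorem~\ref{Valle-Poussin} --- matches the paper's, and your version can be made to work, but the two middle steps differ. The paper compares $U^{\sigma_{1}}$ with $U^{\me}$ \emph{directly}: since $U^{\sigma_{1}}$ is constant on $[-1,\beta]$ and (\ref{Frostman m1}) gives $U^{\me}\geq F_{1}$ there with equality q.e.\ on $\sm\,\me_{1}$, one has $U^{\sigma_{1}}\leq U^{\me}+C-F_{1}$ q.e.\ on $[-1,\beta]\supseteq\sm\,\sigma_{1}$; because $\|\sigma_{1}\|=\|\me\|=1$, the principle of domination (\cite[Theorem II.3.2]{BookST}) upgrades this to all of $\C$ in one stroke, and de la Vall\'ee Poussin then gives $(1-q)\me_{1}\leq\sigma_{1}$. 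This makes your balayage $\widehat{\me_{2}}$ unnecessary: the domination principle does not require the comparison measure to live on $[-1,\beta]$, only that its total mass be at least $\|\sigma_{1}\|$, so one may keep $\me=(1-q)\me_{1}+q\me_{2}$ as is. Your route buys nothing extra here, but it is not wrong.

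The ``main obstacle'' you flag --- the sign of $c_{1}-F_{1}-qc_{b}$ --- is not actually an obstacle, and you should close it rather than leave it conditional. Your $\psi=U^{\rho}-U^{\sigma_{1}}$ is harmonic on $\overline{\C}\setminus[-1,\beta]$ \emph{including at} $\infty$ (the underlying signed measure has total mass zero), with $\psi(\infty)=0$. The boundary of that domain is just $[-1,\beta]$, with $\infty$ an interior point, so the generalized minimum principle gives $\psi\geq F_{1}+qc_{b}-c_{1}$ throughout the domain; evaluating at $\infty$ then forces $F_{1}+qc_{b}-c_{1}\leq 0$ automatically, so the ``bad regime'' is vacuous and the equality set on $\sm\,\me_{1}$ survives globalization. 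Two smaller points to tidy: the boundary inequality holds only q.e., so you need the minimum principle in the form allowing a polar exceptional set (applicable since $\psi$ is bounded below near $[-1,\beta]$); and at the end $A$ contains $\sm\,\me_{1}$ only up to a set of zero capacity, which is harmless because $\me_{1}$ has finite energy and hence charges no polar set.
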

	\begin{proof}
		Let $\sigma_{1} \in \mathcal{P}([-1,\beta])$ be the (unconstrained) equilibrium measure on $[-1,\beta]$. Then $\sigma_{1}$ satisfies	
		\beq \label{Frostman sg}
		U^{\sigma_{1}} = C, \;\;   \textnormal{ on} \; [-1,\beta],
		\eeq
		where $C$ is an explicitly known constant.
		Combining (\ref{Frostman sg}) with (\ref{Frostman m1}), we obtain
		$$
		U^{\sigma_{1}}  = 	U^{\me} + C - F_{1}, \;\;\;  \textnormal{q.e. on} \; \sm \:\me_{1},
		$$
		\beq \label{Dominat1}
		U^{\sigma_{1}} \leq 	U^{\me} + C - F_{1}, \;\;\;  \textnormal{q.e. on} \; [-1, \beta]. 
		\eeq

		Since $\|\sigma_{1}\| = \|\me\| =1$, the principle of domination for logarithmic potentials (see, e.g. \cite[Theorem II.3.2]{BookST}) implies that the inequality (\ref{Dominat1}) holds for every $z \in \mathbb{C}$. By Theorem \ref{Valle-Poussin}, we have $(1-q)\me_{1} \leq \sigma_{1}$. Similarly, we can use the (unconstrained) equilibrium measure $\sigma_{2} \in \mathcal{P}([\beta,1])$ on $[\beta, 1]$ to show that $q\me_{2} \leq \sigma_{2}$.
	\end{proof}

	\subsection{Proof of Theorem \ref{Sol II}}\label{Proof1}

	\begin{proof} 
		\textbf{Step 1.} Determining the support of $\me$. 
		
		Let $\sigma \in \mathcal{P}([-1,1])$ denote the (unconstrained) equilibrium measure on $[-1,1]$ and let $m_{1}:= \sigma([-1,\beta])$, $m_{2}:=\sigma([\beta,1])$. If $q = m_{2}$, then  it's clear that $\me = \sigma$ and $\sm \; \me = [-1,1]$.
		
		We will now assume that $q > m_{2}$ (the proof for the case $q < m_{2}$ is analogous). We shall show that in this case the support of the minimizer is a union of two intervals, $[-1, \alpha_{0}] \cup [\beta_{0}, 1]$, $\alpha_{0} <  \beta_{0}$. For this purpose, we denote as before the restriction of the measure $\me$ to $[-1, \beta]$ and $[\beta, 1]$ by $(1-q)\me_{1}$ and $q\me_{2}$, respectively.
		First, notice that $U^{\me_{2}}(z)$ is a convex function on $[-1, \beta)$, which follows immediately from 
		
		$$
		\frac{d^{2}U^{\me_{2}}(z)}{dz^{2}} = \int \frac{d \me_{2}(\zeta)}{(z-\zeta)^{2}} \geq 0, \;\;\;\; z \in [-1, \beta).
		$$
		Since the measure $\me_{1}$ is the solution to the equilibrium problem on $[-1,\beta]$ with the convex external field $U^{\me_{2}}(z)$, then $\sm \:\me_{1} \cap (-1, \beta)$ is an interval (see \cite[Theorem IV.1.10]{BookST}). By interchanging $\me_{1}$ and $\me_{2}$ and repeating the argument, we obtain that $\sm \: \me_{2} \cap (\beta, 1)$ is also an interval. It remains to show that $\alpha_{0} < \beta_{0}$. On the contrary, suppose that $\alpha_{0} = \beta = \beta_{0}$. Then, by (\ref{Frostman m1}) and (\ref{Frostman m2}), we have $U^{\me}(z) = F_{1} = F_{2}$ on $[-1,1]$, which implies $\me = \sigma$ and contradicts the assumption $q > m_{2}$.

		\textbf{ Step 2.} Finding the density function of the minimizing measure.
		
		 We are going to apply the standard technique of analyzing the Cauchy transform of the measure $\me$ (see, for example, \cite[Section VIII.5]{BookST}). Namely, consider the function 
		\beq \label{functH}
		H(z) = \int \frac{d \me (\zeta)}{z-\zeta}
		\eeq
		on the Riemann sphere $\mathbb{\overline{C}}$ cut along $\sm \: \me = [-1, \alpha_{0}] \cup [\beta_{0}, 1]$. The real part of $H$ vanishes on the cut since it is the derivative of the equilibrium potential of $\me$ and the latter is constant on $\sm \:\me$. 
		Since $H(\overline{z}) = \overline{H(z)}$ and $H(z)$ is continuous on $(-1, \alpha_{0}) \cup (\beta_{0},1)$, the function $H^{2}(z)$ is analytic on $(-1, \alpha_{0}) \cup (\beta_{0},1)$ and therefore analytic on $\mathbb{\overline{C}} \setminus \{-1, \alpha_{0}, \beta_{0}, 1\}$. Using Lemma \ref{AbsCont} one can easily verify that $H^{2}(z)$ can have at most simple poles at the points $\{-1, \alpha_{0}, \beta_{0}, 1\}$, and so $H^{2}(z)$ is a rational function on $\mathbb{\overline{C}}$. Moreover, because $H^{2}(z) \sim \frac{1}{z^{2}}$ when $z \rightarrow \infty$, we must have
		$$
		H^{2}(z) = \frac{(z-A)(z-B)}{(z +1)(z- \alpha_{0})(z-\beta_{0})(z-1)}, \;\;\; A, B \in \mathbb{R},
		$$  
		and thus
		$$
		H(z) = \bigg( \frac{(z-A)(z-B)}{(z +1)(z- \alpha_{0})(z-\beta_{0})(z-1)}  \bigg)^{1/2},
		$$
		where we take the branch of the square root that is positive for positive $z$.

		The function $H(z)$ on the upper part of the cut has the form
		$$
		H(z) = \frac{-i \sqrt{|z-A||z-B|}}{\sqrt{|(z +1)(z- \alpha_{0})(z-\beta_{0})(z-1)|}}, \;\;\; z \in \sm \:\me.  
		$$
	     For $z \in \overline{\mathbb{C}} \setminus \sm \:\me$ the Cauchy's formula gives 
		\beq \label{Hint}
		H(z) = \frac{1}{2 \pi i} \oint_{\sm \: \me } \frac{H(\zeta)}{\zeta - z} d \zeta= \frac{1}{\pi i} \int_{\sm \: \me } \frac{H(y) }{y - z} d y,
		\eeq
		where the second integral is taken on the upper part of the cut. Consequently, (\ref{functH}) together with (\ref{Hint}) imply
		\beq \label{density}
		d \me (y) = \frac{ \sqrt{|y-A||y-B|}}{\pi \sqrt{|(y +1)(y- \alpha_{0})(y-\beta_{0})(y-1)}|} dy.
		\eeq
		Our next goal is to show that in the expression (\ref{density}) of the density function $d \me(y)$ the constants $A, B$ must be equal to $\alpha_{0}$. Indeed, if we assume that $A \ne \alpha_{0} $ or $B \ne \alpha_{0}$ and then consider for $x \in (\alpha_{0}, \beta_{0})$
		$$
		\frac{d U^{\me}(x)}{dx} = -\frac{1}{\pi} \displaystyle\int\limits_{[-1, \alpha_{0}]\cup[\beta_{0},1]} \frac{1}{x-y} \frac{ \sqrt{|y-A||y-B|}}{\sqrt{|(y +1)(y- \alpha_{0})(y-\beta_{0})(y-1)}|} dy,
		$$
		it's easy to see that $\frac{d U^{\me}(x)}{dx} \rightarrow -\infty$ when $x  \rightarrow \alpha_{0}^{+}$. 
		If $\alpha_{0} < \beta$, then this contradicts (\ref{Frostman m1}), and we conclude that $A=B=\alpha_{0}$. Next, we show that the case $\alpha_{0} = \beta$ is impossible under the assumption $q > m_{2}$. To show this, assume the contrary, $\alpha_{0} = \beta$.
		Then it implies that $A, B$ in (\ref{density}) should be equal to $\beta_{0}$, since otherwise we would have $\frac{d U^{\me}(x)}{dx} \rightarrow \infty$ when $x  \rightarrow \beta_{0}^{-}$, that contradicts (\ref{Frostman m2}). Thus, the density function in this case is
		$$
		d \me (y) = \frac{ \sqrt{|y-\beta_{0}|}}{\pi \sqrt{(y +1)(y- \alpha_{0})(y-1)}} dy,
		$$
		and, since $\alpha_{0} \leq \beta_{0}$,
		$$
		\frac{1}{\pi} \int_{\beta_{0}}^{1} \frac{ \sqrt{|y-\beta_{0}|}}{\sqrt{(y +1)(y- \alpha_{0})(y-1)}} dy \leq
		\frac{1}{\pi} \int_{\beta_{0}}^{1} \frac{dy}{\sqrt{(y +1)(y-1)}} = m_{2},
		$$
		that contradicts the assumption $\me([\beta, 1]) = q > m_{2}$ and proves that we necessarily have $\alpha_{0}<\beta$ as well as $\beta_{0} = \beta$, that completes the proof.

	\end{proof}

	\begin{figure}[h!] 
		\center{\includegraphics[scale=0.6]{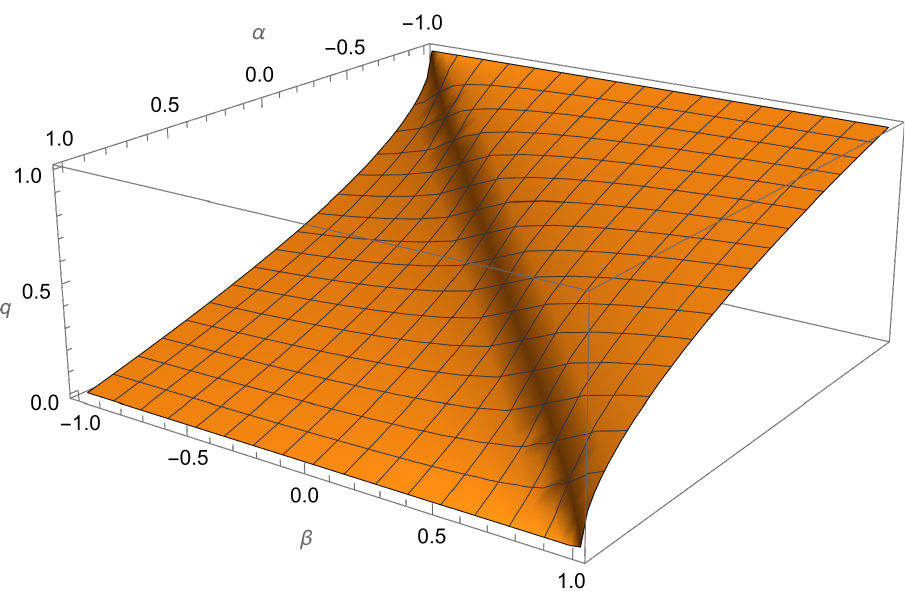}} 
		\caption{Graph showing the relationship between the parameters $\alpha$, $\beta$ and $q$. }
	\end{figure}

	\subsection{Proof of Theorem \ref{Sol I}} \label{Proof2}

	\begin{proof}
		For a measure $\mu \in \mathcal{P}([-1,1])$ with density $f(x)$, consider its logarithmic potential
		\beq \label{muPot}
		U^{\mu}(z) =  \int_{-1}^{1} \log \frac{1}{|z - x|} f(x) dx
		\eeq
		Consider the Joukowski map $z = \Psi(\zeta): = \frac{1}{2}(\zeta + \zeta^{-1})$ that maps the exterior of the unit circle, conformally to $\mathbb{C} \setminus [-1,1]$. We extend continuously the function $\Psi(\zeta)$ to the unit circle $\mathbb{S}^{1}$ and use the same notation for the extension. For $z = \Psi(\zeta) \in \mathbb{C} \setminus [-1,1]$ the potential (\ref{muPot}) can be written as
		\beq
		\begin{split} 
			U^{\mu}(z) &= \int_{-\pi}^{\pi} \log \frac{1}{|\Psi(\zeta) - \Psi(e^{i \psi})|} \widetilde{f}(\psi) d\psi \\
			&= \int_{-\pi}^{\pi} \log \bigg|\frac{\zeta - e^{i \psi}}{\Psi(\zeta) - \Psi(e^{i \psi})}\bigg| \widetilde{f}(\psi) d\psi + \int_{-\pi}^{\pi} \log \frac{1}{|\zeta - e^{i \psi}|} \widetilde{f}(\psi) d\psi,
		\end{split}
		\eeq
		where $x = e^{i \psi}$ and $\widetilde{f}$ is an even function on $\mathbb{S}^{1}$ with $\int_{\mathbb{S}^{1}} \widetilde{f} d\psi =1$.
		Let us denote $d\nu(e^{i \psi}):=\widetilde{f}(\psi)d\psi$. Then we have
		$$
		U^{\mu}(z) = h(\zeta) + U^{\nu}(\zeta), \;\; z = \Psi(\zeta) \in \mathbb{C} \setminus [-1,1].
		$$ 
		Since the function 
		$$g(\omega) = \log \bigg|\frac{\omega - \zeta}{\Psi(\omega) - \Psi(\zeta)}\bigg|$$
		 is harmonic in $\Delta:=\{z \in \mathbb{C}: |z| >1\}$ and continuous on the boundary, we have 
		\beq \label{Ucontin}
		\begin{split}
			\underset{\zeta \rightarrow e^{i \varphi}}{\lim} h(\zeta) &= \int_{-\pi}^{\pi} \log \bigg|\frac{e^{i \varphi} - e^{i \psi}}{\Psi(e^{i \varphi}) - \Psi(e^{i \psi})}\bigg| \widetilde{f}(\psi) d\psi \\&= \int_{-\pi}^{\pi} \log \frac{1}{|e^{-i \varphi} - e^{i \psi}|} \widetilde{f}(\psi) d\psi + \log 2.
		\end{split}
		\eeq
		
		If $U^{\mu}(z)$ is continuous in $\mathbb{C}$, then in view of (\ref{Ucontin}) we have
		
		\beq \label{limit}
		U^{\mu}(\Psi(e^{i \varphi}))  = \underset{z  \rightarrow \Psi(e^{i \varphi})}{\lim} U^{\mu}(z) = \underset{\zeta  \rightarrow e^{i \varphi}}{\lim} (h(\zeta) + U^{\nu}(\zeta)) = 2 U^{\nu}(e^{i \varphi}) + \log 2,
		\eeq     
		where the last equality follows from (\ref{Ucontin}) together with the fact that $U^{\nu}(\zeta) = U^{\nu}(\overline{\zeta})$, $\zeta \in \overline{\Delta}$.
		
		Now, if we take $\mu$ above to be the measure from (\ref{densitySol}) with $\beta = \cos(\frac{\theta}{2})$ and $q \geq \frac{1}{\pi} \int_{\beta}^{1} \frac{1}{\sqrt{1 - x^{2}}} dx = \frac{\theta}{2 \pi}$, then it's clear that the potential $U^{\me}(z)$ is continuous in $\mathbb{C}$, and
		$$
		U^{\me}(z) =  \frac{1}{2\pi} \int_{S} \log \frac{1}{|z - \Psi(e^{i \psi})|} \frac{\sqrt{|\cos(\psi) - \alpha|}}{\sqrt{|\cos(\psi) - \beta|}} d\psi,
		$$
		where the set $S := A_{\theta} \cup \{z \in \mathbb{S}^{1}: \arccos(\alpha) \leq \arg \; z \leq 2\pi -  \arccos(\alpha)\}$, and $\alpha$ is defined by (\ref{alpha2}). Consequently, (\ref{limit}) and \cite[Theorem VIII.2.1]{BookST} imply that the measure $\men$, where $d \men (e^{i \psi}) = \frac{1}{2 \pi} \frac{\sqrt{|\cos(\psi) - \alpha|}}{\sqrt{|\cos(\psi) - \beta|}} d \psi$, is the solution to the Problem II.
	\end{proof}

	\end{document}